\newcommand{\U}[1]{U_{#1}}
\renewcommand{\S}{\mathcal{S}}
\newcommand{\F}{ \boldsymbol{G} }
\renewcommand{\vec}{\mathrm{vec}}
\newcommand{\z}{\phantom{0}}
\renewcommand{\d}{\mathrm{d}}
\newcommand{\vect}[1]{\boldsymbol{#1}}
\newcommand{\vel}[1]{\mathbf{#1}}
\newtheorem{thm}{Theorem}
\newtheorem{remark}{Remark}
\newcommand{\buniv}{\hat{b}}
\newcommand{\Bmull}[2]{\hat{B}_{#2}^{#1}}
\newcommand{\bunivv}[2]{\hat{b}_{#2}^{#1}}
\newcommand{\dimp}[1]{m_{\alpha_{#1}}^{p}}
\newcommand{\dimpu}[1]{m_{\alpha_{#1}+1}^{p+1}}
\newcommand{\maten}[3]{\left[#1\right]_{#2,#3}}
\newcommand{\geo}[1]{{\mathcal{P}}_{#1}^{\F}}
\numberwithin{equation}{section}
\newcommand{\precVg}{{P}_V^{\F}}
\newcommand{\precQg}{{P}_Q^{\F}}
\newcommand{\precVgg}{\accentset{\frown}{P}_V}
\newcommand{\precVggc}[1]{\accentset{\frown}{{P}}_{V,#1}}
\def\GS{\color{black}} 
\def\B{\color{black}}
\def\MT{\color{black}}
\def\MT2{\color{red}}
\begin{document}

\pagestyle{myheadings}
\markboth{M. Montardini, G. Sangalli and M. Tani}{}

\title {Robust isogeometric preconditioners for the Stokes system based on the Fast Diagonalization method \thanks{Version of April 10, 2018}}

\author{M. Montardini \thanks{Universit\`a di Pavia, Dipartimento di Matematica ``F. Casorati'', 
Via A. Ferrata 1, 27100 Pavia, Italy. } \and
  G. Sangalli$^{\dag}$\thanks{IMATI-CNR ``Enrico Magenes'',  Pavia, Italy. \vskip 1mm \noindent Emails: 
{\tt  monica.montardini01@universitadipavia.it,  \{giancarlo.sangalli, mattia.tani\}@unipv.it}}
\and  M. Tani $^{\dag}$}

%
\maketitle

\begin{abstract}

In this paper we propose  a new class of preconditioners for
the isogeometric discretization of the Stokes system. 
Their application involves the solution of a Sylvester-like equation,
which can be done efficiently thanks to the Fast Diagonalization
method. \GS These preconditioners  are robust with respect to both the
spline degree and mesh size.  By  incorporating  information on the
geometry parametrization  and  equation coefficients, we maintain efficiency on non-trivial
computational domains  and for variable kinematic
viscosity. \B  In our numerical tests we compare to a standard approach, showing that  the overall iterative solver based on our preconditioners is significantly faster.

\vskip 1mm
\noindent
{\bf Keywords:} Isogeometric analysis, $k$-method, preconditioning, Stokes system, tensor product,
Kronecker product.
 \end{abstract}

\section{Introduction}

Isogeometric analysis (IGA) has been introduced by T.J.R. Hughes et al. in
the seminal paper \cite{Hughes2005}. IGA is   an innovative numerical method
to discretize  partial differential equations (PDEs), 
 based on using the same functions that
describe the computational domain in computer-aided
design (CAD) systems also for the representation of the solution. These functions are  B-Splines or NURBS or
generalizations of them\B.
For a complete description of the method and an overview of various
engineering applications, see \cite{Cottrell2009}. For a
mathematical-oriented  overview of IGA we refer to \cite{acta-IGA}.

 IGA is a   high-order numerical method, when
 high-degree polynomial/spline approximation is adopted. However  within
 IGA there is the 
possibility of high-regularity approximating
 functions. The typical case is indeed when splines of degree $p$ and global $C^{p-1}$
regularity are used within each patch. This is called   the
isogeometric $k$-method, which presents significant  advantages in comparison to $C^0$ finite
elements of degree $p$,  from many points of view:   higher accuracy per degree-of-freedom 
(see \cite{Evans_Bazilevs_Babuska_Hughes,acta-IGA}), improved spectral
behaviour   (see \cite{HRS08}), the possibility of dealing directly with
higher-order PDEs (\cite{GCBH08} is the first paper in this direction) or constructing
smooth structure-preserving schemes (see \cite{Buffa2011}).

  In this paper  the problem of interest is the  Stokes
system. We consider in particular two well-known  isogeometric
discretizations for which stability and convergence is known. 
One is the  extension of the Taylor-Hood 
element, which is \emph{inf-sup} stable, see 
\cite{Bazilevs2006,Buffa2011,Buffa2011b,Bressan2010,Bressan2012}. The other is the  extension
of the Raviart-Thomas element, which is stable and
structure-preserving, in the sense that the discrete solution is
pointwise divergence-free; see \cite{Buffa2011b,Evans2013} (and
\cite{Evans2013b,Evans2013c} for its extension to
Navier-Stokes). Both allow for arbitrary degree and regularity, in the spirit of
the $k$-method. 

The $k$-method is not costless: the computational
cost  per degree-of-freedom 
when  dealing with the $k$-method linear system grows as the degree and
regularity increase. In this paper we focus on  the cost of solving the
system, which is only one part of the problem (the other important part is the formation of the system
matrix, which is also an active research field).  Linear solvers that are developed for finite elements (e.g., direct
\cite{Collier2012}, iterative  multilevel \cite{Buffa2013}) work well
for low-degree isogeometric analysis but the computational performance
deteriorates for the high-degree $k$-method. Recently,  papers have appeared with preconditioners that behave
robustly for the isogeometric $k$-method: \cite{BeiraodaVeiga2013}
adopts a domain-decomposition approach,  \cite{Donatelli2015} and
\cite{Hofreither2017} are based on the multigrid idea (in particular,
the latter contains a proof of robustness, based on the
theory of 
{ \cite{Hofreither2017}}), and finally \cite{Sangalli2016}, which
uses a direct solver at the preconditioner stage, and takes
advantage of the tensor-product structure of the multivariate spline
spaces. All these  papers deal with the Poisson problem. 

Isogeometric preconditioners for the
Stokes system have also been studied in recent papers:
\cite{Cortes2015,Cortes2017}  consider block-diagonal and 
block-triangular preconditioners combined to  black-box solvers (either
algebraic-multigrid or incomplete factorization);  \cite{Pavarino2016}
studies the domain-decomposition FETI-DP  strategy;  \cite{Coley2017}
{ focuses} on  a multigrid strategy;
{ another multigrid approach, which extends the results of { \cite{Hofreither2017}}, can be found in \cite{Takacs2017}.}

In the present work, for both Taylor-Hood  and   Raviart-Thomas isogeometric
discretizations of the Stokes system, we consider  preconditioners having the classical
{ block structure} (see \cite{Elman2014}) and using direct solvers to
invert the  diagonal blocks. 

In the simplest approach, our pressure Schur complement preconditioner is
the  pressure mass matrix in parametric coordinates, which is solved
by  exploiting   its
Kronecker structure. Moreover,  our
preconditioner  for  the velocity blocks is a component-wise
Laplacian in parametric coordinates, and  its solution is the
solution of a Sylvester-like equation. The latter equation is well studied
in the numerical linear algebra community (see for example the
overview \cite{Simoncini2016}); among many methods, following
\cite{Sangalli2016} we adopt a direct solver named Fast
Diagonalization (FD) method, see \cite{Deville2002,Lynch1964}. 

An important problem we have to face is  the treatment of the
geometry parametrization.  The simplest approach outlined above does not incorporate any geometry information in
the preconditioner, causing  a significant loss of
efficiency on complex geometry parametrizations. To overcome this
limitation,   we propose a modification of the preconditioner for a
partial inclusion of the geometry information, without increasing its
computational cost. Even though the mathematical analysis of this modification is postponed
to  a later work, in our numerical benchmarking  we show the clear benefits of this
approach. 
Indeed, we show theoretically and numerically that our preconditioner is robust with respect to the mesh size $h$ and spline degree $p$, both for the isogeometric  Taylor-Hood  and Raviart-Thomas methods. 
While previous papers considered  low-degree splines only
  (typically quadratics and cubics), we are motivated to consider
  higher degrees in our tests (up to degree $6$ for the velocity and
  $5$ for the pressure,  for memory constraints) by the fact that the
  computational cost of our preconditioner is almost independent of
  the degree. The iterative solver total  computational time  is $O(n_{dof} p^3)$, but  it is
 heavily dominated by the
 matrix-vector multiplication which takes more than the
 $99\%$ of the overall cost when the pressure degree is $5$ and the
 velociy degree is $6$,  on a $16^3$ elements mesh. In this case our preconditioners is much faster than the
alternatives known in literature: for example, about $3$ orders of
magnitude when comparing to a standard preconditioner
based on the incomplete Cholesky factorization, which is
known to be  an  effective choice (see, e.g., \cite{Cortes2015}). 

In conclusion our  numerical benchmarks confirm that the proposed preconditioner
is very efficient and well suited for the $k$-method. 
Further advances in the solver performance can be achieved with a
matrix-free approach, that accelerates the  matrix-vector
multiplication operation,  for moderate or large degree. A first step in this research direction is \cite{Sangalli2017}.

The outline of the paper is as follows.
In Section 2 we give a short review of the  Taylor-Hood and Raviart-Thomas
isogeometric discretizations for the Stokes system, and  summarize  the main properties of the Kronecker product. 
The derivation of the discrete Stokes system is given in Section 3, while in
Section 4 we introduce some standard   block-structured  preconditioners  that we will consider in the numerical tests.
The core of the paper is Section 5, where we focus on the construction
of the preconditioning matrices for the velocity and pressure
blocks, discuss their properties and solution strategies. 
In the Section 6 we  propose the modification aimed at
improving the preconditioner efficiency by incorporating some information on the geometry parametrization.
Numerical results on three different single-patch domains are reported in Section 7.
Finally, in Section 8 we draw the conclusions and discuss future directions of research.

\section{Preliminaries} \label{sec:preliminaries}
\subsection{B-splines}
\label{sec:bas-isog-an}
In this section we summarize some basic concepts of B-spline based
isogeometric analysis, referring to \cite{Cottrell2009} for the details. 

Given $m$ and $p$ two positive integers, we introduce a \emph{knot vector} $\Xi:=\{0=\xi_1\leq ...\leq \xi_{m+p+1}=1\}$ and  the associated \emph{breakpoint vector}  $\mathcal{Z}:= \{\zeta_1,...,\zeta_s\}$, which
contains knots without repetitions.
We use \emph{open} knot vectors, i.e. we suppose $\xi_1=...=\xi_{p+1}=0$ and $\xi_m=...=\xi_{m+p+1}=1$ . 

 Then, according to Cox-De Boor recursion formulas \cite{DeBoor2001}, we
define univariate B-splines as:
\\
\indent
for $p=0$:
\begin{IEEEeqnarray*}{r"l}
\buniv_{ \vect{\alpha}, i}^{0}(\eta)= \begin{cases}1 &  { \textrm{if }} \xi_{i}\leq \eta<\xi_{i+1},\\
0 & \textrm{otherwise;}
\end{cases}
\end{IEEEeqnarray*}
\indent
for $p \geq 1$:
\begin{IEEEeqnarray*}{r"l}
\buniv_{ \vect{\alpha}, i}^{p}(\eta)=\! \begin{cases}\dfrac{\eta-\xi_{i}}{\xi_{i+p}-\xi_{i}}\buniv_{ \vect{\alpha}, i}^{p-1}(\eta)  \B +\dfrac{\xi_{i+p+1}-\eta}{\xi_{i+p+1}-\xi_{i+1}}\buniv_{ \vect{\alpha}, i+1}^{p-1}(\eta)  \B  & { \textrm{if }} \xi_{i}\leq  \eta<\xi_{i+p+1}, \\[8pt]
0  & \textrm{otherwise,}
\end{cases}
\end{IEEEeqnarray*}
where we adopt the convention $0/0=0$ and  $\boldsymbol\alpha: =\{-1,\alpha_2,...,\alpha_{s-1},-1 \} $ represents the \emph{regularity vector}.
Therefore, B-splines are piecewise polynomials with $\alpha _i$ continuous derivatives at $\zeta_i$. 
 The sum of the continuity and the  multiplicity at a breakpoint  is equal to the degree $p$, see \cite{Cottrell2009}.

The corresponding \emph{ univariate spline space}  is defined as
\begin{equation*}
\hat{\S}^p_{\boldsymbol\alpha}: = \mathrm{span}\{\bunivv{p}{\vect{\alpha},i}\}_{i = 1}^m. 
\end{equation*}

  To simplify the notation, we assume throughout this paper that the knot vector is \emph{uniform}, i.e. with equally spaced breakpoints, and the mesh size is denoted by $h$.
  For the same reason, we consider uniform regularity $\boldsymbol\alpha  =\{-1,\alpha,...,\alpha,-1 \} $. 
  Then we use the notation $\hat{\S}^p_{\alpha}$, $\bunivv{p}{\alpha,i}$  and set $m_{\alpha}^p:=m=\mathrm{dim}(\hat{\S}^p_{\alpha})$.
The extension of this framework to  non-uniform knot vectors and arbitrary regularity
is trivial (see, in this context,  \cite[Remark 4.4]{Bressan2012} and
\cite{Buffa2011})  and is considered in our numerical tests. \B

We consider  multivariate B-splines as  tensor-products of 
univariate B-splines.  For 3D problems, the case we address in this paper, the
univariate knot vectors $\Xi_l:=\{\xi_{l,1},...,\xi_{l,m_{\alpha_l}^{p_l}+p_l+1}\}$ for $
l=1,2,3$ and degree indices  $\vect{p}=(p_1,p_2,p_3) $ are given and,   for a multi-index $\vect{i} = (i_1,i_2,i_3)$, the multivariate B-spline is defined as
\begin{equation*}
\Bmull{\vect{p}}{\vect{\alpha}, \vect{i}}(\vect{\eta}) : = \bunivv{p_1}{\alpha_1, i_1}(\eta_1)\bunivv{p_2}{\alpha_2, i_2}(\eta_2)\bunivv{p_3}{\alpha_2, i_3}(\eta_3)
\end{equation*}
 where $\vect{\eta} = (\eta_1, \eta_2, \eta_3)$, and the \emph{multivariate spline space}  as
\begin{equation*}
\hat{\S}_{ \alpha_1, \alpha_2, \alpha_3}^{p_1,p_2,p_3}:=\hat{\S}_ {\alpha_1}^{p_1}
\otimes\hat{\S}_{ \alpha_2}^{p_2}\otimes \hat{\S}_{ \alpha_3}^{p_3}=\mathrm{span}\{\Bmull{\vect{p}}{\vect{\alpha}, \vect{i}} \ | \ i_k = 1,..., m_{\alpha_k}^{p_k}; k=1,2,3 \}.
\end{equation*}

 Throughout this paper, we  refer to \emph{spline spaces} as spaces of splines  defined on the parametric domain $\widehat{\Omega}:=[0,1]^3$.

\subsection{Isogeometric spaces}
\label{sec:iga-spaces}
 Let the computational domain $\Omega \subset \mathbb{R}^3$
be given by a single-patch spline parametrization
$\F\in \hat{\S}_{ \alpha_1, \alpha_2, \alpha_3}^{p,p,p} \times \hat{\S}_{ \alpha_1, \alpha_2, \alpha_3}^{p,p,p} \times \hat{\S}_{ \alpha_1, \alpha_2, \alpha_3}^{p,p,p} $ 
of degree $p$ in each parametric direction. 
We assume that $\F$ is nonsingular, in the sense that its Jacobian is
everywhere invertible.

\emph{Isogeometric spaces} over $\Omega$ are suitable push-forwards, through $\F$, of spline spaces. 
In particular, in the context of the Stokes system, we focus on  two  discretizations of isogeometric spaces that have been proposed in \cite{Bressan2012} and \cite{Buffa2011} respectively.  
 Their definition and properties are
summarized in this section, see
\cite{Bazilevs2006,Bressan2010,Bressan2012,Buffa2011,Evans2013} for further details.

\subsubsection{Taylor-Hood isogeometric spaces} \label{sec:tay_hoo} 
  The Taylor-Hood (TH) spline spaces are defined as
  \begin{displaymath}
    \begin{aligned}
      \hat{V}_h^{TH}&:= \hat{\S}^{p+1,p+1,p+1}_{\alpha_1, \alpha_2,\alpha_3} \times   \hat{\S}^{p+1,p+1,p+1}_{\alpha_1,\alpha_2,\alpha_3}  \times    \hat{\S}^{p+1,p+1,p+1}_{\alpha_1,\alpha_2,\alpha_3} \\
\hat{Q}_h^{TH}&:= \hat{\S}^{p,p,p}_{\alpha_1,\alpha_2,\alpha_3}. 
    \end{aligned}
  \end{displaymath}
For the velocity space we will also need
$$
\hat{V}_{h,0}^{TH}:= \left\lbrace \hat{\vel{v}}_h\in \hat{V}_{h}^{TH}\ \middle|\ \hat{\vel{v}}_h = 0 \ \mathrm{on} \ \partial \widehat{\Omega} \right\rbrace.
$$
A basis for $\hat{V}_{h}^{TH}$ is 
\begin{equation*}
\left\lbrace \vel{e}_k \Bmull{\vect{p}+1}{\vect{\alpha}, \vect{i}} \ \middle|\  i_l=1,...,m_{\alpha_l}^{p+1}; \  k,l=1,2,3 \right\rbrace.
\end{equation*} 
where $\vect{p}+1:=(p+1, p+1, p+1)$ and $\vel{e}_k$ is the $k$-th canonical basis vector { of $\mathbb{R}^3$}.

A basis for $\hat{V}_{h,0}^{TH}$ is then
\begin{equation}
\left\lbrace \vel{e}_k\Bmull{\vect{p}+1}{\vect{\alpha}, \vect{i}} \ \middle|\  i_l=2,...,m_{\alpha_l}^{p+1}-1; \ k,l=1,2,3 \right\rbrace
\label{eq:V_TH_0_basis1}.
\end{equation}
To each multi-index $\vect{i}$ present in \eqref{eq:V_TH_0_basis1} we associate  a scalar index $i$, corresponding to the lexicographical ordering of the internal degrees of freedom, \B such that
$$i = i_1-1+(i_2-2)(m_{\alpha_1}^{p+1}-2)+(i_3-2)(m_{\alpha_1}^{p+1}-2)(m_{\alpha_2}^{p+1}-2)$$ 
and, with abuse of notation, we rewrite the basis of $\hat{V}_{h,0}^{TH}$ as
\begin{equation*}
\left\lbrace \vel{e}_k\Bmull{\vect{p}+1}{\vect{\alpha}, i} \ \middle|\  i=1,...,n_{V,k}^{TH}; \ k=1,2,3 \right\rbrace,
\end{equation*} 
where $ n_{V,1}^{TH}=n_{V,2}^{TH}=n_{V,3}^{TH}:=(m_{\alpha_1}^{p+1}-2)(m_{\alpha_2}^{p+1}-2)(m_{\alpha_3}^{p+1}-2).
$

A basis for $\hat{Q}_{h}^{TH}$ is
\begin{equation}
\left\lbrace \Bmull{\vect{p}}{\vect{\alpha}, \vect{i}} \ \middle|\  i_l=1,...,m_{\alpha_l}^{p};\ l=1,2,3 \right\rbrace.
\label{eq:Q_TH_basis1}
\end{equation}
To each multi-index $\vect{i}$ present in \eqref{eq:Q_TH_basis1} we associate  a scalar index $i$, corresponding to the lexicographical ordering of the internal degrees of freedom, such that
\begin{equation}
i = i_1+(i_2-1) m_{\alpha_1}^{p}+(i_3-1) m_{\alpha_1}^{p} m_{\alpha_2}^{p} 
\label{eq:index}
\end{equation}
and, with abuse of notation, we rewrite the basis of $\hat{Q}_{h}^{TH}$  as
\begin{equation}
\left\lbrace \Bmull{\vect{p}}{\vect{\alpha}, i} \ \middle|\  i=1,...,n_Q^{TH} \right\rbrace,
\label{eq:Q_TH_basis}
\end{equation}
where  
\begin{equation}
n_Q^{TH}:=\mathrm{dim}(\hat{Q}_{h}^{TH})=m_{\alpha_1}^{p}m_{\alpha_2}^{p}m_{\alpha_3}^{p}.
\label{eq:np}
\end{equation}
The TH isogeometric spaces are the isoparametric push-forwards  (see \cite{Bressan2012,Buffa2011}):
\begin{IEEEeqnarray}{l}
\IEEEyesnumber\label{eq:TH_basis} \IEEEyessubnumber*
V_{h,0}^{TH}:=\mathrm{span}\left\lbrace \phi_i^{k,TH}  :=   \vel{e}_k\Bmull{\vect{p}+1}{\vect{\alpha}, i}\circ\F^{-1} \ \middle|\  i=1,...,n_{V,k}^{TH}; \ k=1,2,3  \right\rbrace
\label{eq:TH_V_basis}\\
Q_{h}^{TH}:=\mathrm{span}\left\lbrace \rho_i^{TH}:= \Bmull{\vect{p}}{\vect{\alpha}, i}\circ\F^{-1}\ \middle|\  i=1,...,n_Q^{TH}\right\rbrace.
\label{eq:TH_Q_basis}
\end{IEEEeqnarray}
For the discrete variational formulation of the Stokes system we will also need the space
\begin{equation}
Q_{h,0}^{TH}:=\left\lbrace q \in Q^{TH}_h \ \bigg\vert \ \int_{\Omega} q\ \d\Omega = 0 \right\rbrace.
\label{eq:TH_Q0}
\end{equation}
\B

\subsubsection{Raviart-Thomas isogeometric spaces}
The Raviart-Thomas (RT) spline spaces are defined as
\begin{IEEEeqnarray*}{rl}
\hat{V}_h^{RT}& := \hat{\S}^{p+1,p,p}_{\alpha_1+1, \alpha_2,\alpha_3} \times \hat{\S}^{p,p+1,p}_{\alpha_1,\alpha_2+1,\alpha_3} \times \hat{\S}^{p,p,p+1}_{\alpha_1,\alpha_2,\alpha_3+1} \\
\hat{Q}_h^{RT} & :=  \hat{\S}^{p,p,p}_{\alpha_1,\alpha_2,\alpha_3}.
\end{IEEEeqnarray*} 
For the velocity space we will also need
$$
\hat{V}_{h,0}^{RT}:=\left\lbrace \vel{\hat{v}}_h\in \hat{V}_{h}^{RT} \ \middle|\  \hat{\vel{v}}_h\cdot \vel{n} =0 \ \mathrm{on} \ \partial\widehat{\Omega} \right\rbrace.
$$
A basis for $\hat{V}_h^{RT}$ is
\begin{align*}
& \left\lbrace \vel{e}_k \Bmull{\vect{p}+\vel{e}_k}{\vect{\alpha} +\vel{e}_k, \vect{i}} \ \middle|\  i_k=1,..., \dimpu{k};\ i_l=1,...,\dimp{l};\ l\neq k;\ l,k=1,2,3\right\rbrace ,
\end{align*}
where $\vect{p}+\vel{e}_1 = (p+1,\ p,\ p)$, $\vect{p}+\vel{e}_2 = (p,\ p+1,\ p)$, $\vect{p}+\vel{e}_3 = (p,\ p,\ p+1)$ and $\vect{\alpha}+\vel{e}_1 = (\alpha_1+1,\ \alpha_2,\ \alpha_3)$, $\vect{\alpha}+\vel{e}_2 = (\alpha_1,\ \alpha_2+1,\ \alpha_3)$, $\vect{\alpha}+\vel{e}_3 = (\alpha_1,\ \alpha_2,\ \alpha_3+1)$.

A basis for $\hat{V}_{h,0}^{RT}$ is then
\begin{align}
& \left\lbrace \vel{e}_k\Bmull{\vect{p}+\vel{e}_k}{\vect{\alpha} +\vel{e}_k, \vect{i}}\ \middle|\  i_k=2,..., \dimpu{k}-1;\ i_l=1,...,\dimp{l};\ l\neq k; \ l,k=1,2,3\right\rbrace.
\label{eq:basisRT_V1}
\end{align}
To each multi-index $\vect{i}$ present in \eqref{eq:basisRT_V1} we associate  a scalar index $i$, corresponding to the lexicographical ordering of the internal degrees of freedom,   such that
\begin{IEEEeqnarray*}{c}
 \mathrm{for} \ k=1 \quad  i = i_1-1+(i_2-1)(\dimpu{1}-2)+(i_3-1)(\dimpu{1}-2)\dimp{2},\\
  \mathrm{for} \ k=2 \quad  i = i_1+(i_2-2)\dimp{1}+(i_3-1)\dimp{1}(\dimpu{2}-2),\\
  \mathrm{for} \ k=3 \quad  i = i_1+(i_2-1)\dimp{1}+(i_3-2)\dimp{1}\dimp{2}
  \end{IEEEeqnarray*}
and, with abuse of notation, we rewrite the basis of $\hat{V}^{RT}_{h,0}$ as
\begin{align}
& \left\lbrace \vel{e}_k\Bmull{\vect{p}+\vel{e}_k}{\vect{\alpha} +\vel{e}_k, i} \ \middle|\ i=1,...,n_{V,k}^{RT};  \ k=1,2,3\right\rbrace,
\label{eq:basisRT_V0}
\end{align}
where
$$n^{RT}_{V,1}=(\dimpu{1}-2)\dimp{2}\dimp{3}, \quad\quad n^{RT}_{V,2}=\dimp{1}(\dimpu{2}-2)\dimp{3}, \quad\quad n^{RT}_{V,3}=\dimp{1}\dimp{2}(\dimpu{3}-2).$$
As $\hat{Q}_h^{RT}=\hat{Q}_h^{TH}$, a basis for  $\hat{Q}_h^{RT}$ is  \eqref{eq:Q_TH_basis} and its dimension is denoted by $n_{Q}^{RT}=n_{Q}^{TH}=n_Q$ (cfr. \eqref{eq:np}).

The RT isogeometric spaces  are defined  by suitable push-forwards:
\begin{IEEEeqnarray}{l}
\IEEEyesnumber\label{eq:RT_basis} \IEEEyessubnumber*
V_{h,0}^{RT}:=\mathrm{span}\left\lbrace \phi_i^{k,RT}:= \left( {\left(\mathrm{det}(J_{\F})\right)}^{-1}J_{\F}  \vel{e}_k\Bmull{\vect{p}+\vel{e}_k}{\vect{\alpha} +\vel{e}_k, i } \right) \circ\F^{-1}  \ \middle|\  i=1,...,n_{V,k}^{RT}; \ k=1,2,3 \right\rbrace
 \label{eq:RT_V_basis}\\
Q_{h}^{RT}:=\mathrm{span}\left\lbrace \rho_i^{RT}:= \left( {\left(\mathrm{det}(J_{\F})\right)}^{-1}\Bmull{\vect{p}}{\vect{\alpha} , i} \right)\circ\F^{-1}\ \middle|\  i=1,...,n_Q^{RT} \right\rbrace
\label{eq:RT_Q_basis}.
\end{IEEEeqnarray}
The push-forward employed for $V_{h,0}^{RT}$ is the Piola transform and its use is important to assure inf-sup stability, see \cite{Buffa2011} and Section \ref{sec:Stokes}. 

We remark that although in the parametric domain $\hat{Q}_h^{RT}=\hat{Q}_h^{TH}$, in general ${Q}_h^{RT}\neq {Q}_h^{TH}$.\B

 For the discrete variational formulation of the Stokes system we will also need the space
\begin{equation}
Q_{h,0}^{RT}:=\left\lbrace q \in Q^{RT}_h \ \bigg\vert \ \int_{\Omega} q\ \d\Omega = 0 \right\rbrace.
\label{eq:RT_Q0}
\end{equation}
\B
  
\subsection{The Kronecker product}
\label{sec:kronecker}
We restrict to the case of square matrices and we consider $A\in \mathbb{R}^{n_1\times n_1}$, $B\in \mathbb{R}^{n_2 \times n_2}$ and $C\in \mathbb{R}^{n_3\times n_3}$. 
The entries of the matrix $A$ are denoted with $[A]_{i,j}$. 

The \emph{Kronecker product} between $A$ and $B$ is defined as 
$$ A \otimes B := \begin{bmatrix} \maten{A}{1}{1} B & \ldots & \maten{A}{1}{n_1} B \\ \vdots & \ddots & \vdots \\ \maten{A}{n_1}{1}B & \ldots & \maten{A}{n_1}{n_1} B \end{bmatrix} \; \in \mathbb{R}^{n_1 n_2 \times n_1 n_2}.$$
This operation is associative: $
 A \otimes B \otimes C =  (A \otimes B) \otimes C =  A \otimes (B \otimes C).
$ 

Given a tensor  $\mathbb{W}\in \mathbb{R}^{n_1\times n_2 \times n_3}$, the vec operator converts  $\mathbb{W}$ to a vector $\mathrm{vec}(\mathbb{W}) \in \mathbb{R}^{n_1 n_2 n_3}$ as
$$
\left[\mathrm{vec}(\mathbb{W})\right]_{i}:=\maten{\mathbb{W}}{i_1,i_2}{i_3}
$$
where $i=i_1+(i_2-1)n_1+(i_3-1)n_1n_2$, for $i_k=1,...,n_k$ and $k=1,2,3$.

Let $Y_m\in\mathbb{R}^{t\times n_m}$ for $m=1,2,3$ be three matrices. 
The \emph{m-mode} product $\times_m$ gives the following tensors
$$
[\mathbb{W} \times _1 Y_1]_{i_1,i_2,i_3}:=\sum_{k=1}^{n_1}\maten{Y_1}{i_1}{k}[\mathbb{W}]_{k, i_2, i_3 }
\quad
[\mathbb{W} \times _2 Y_2]_{i_1,i_2,i_3}:=\sum_{k=1}^{n_2}\maten{Y_2}{i_2}{k}[\mathbb{W}]_{i_1, k, i_3}
$$
\vskip -3mm
$$
[\mathbb{W} \times _3 Y_3]_{i_1,i_2,i_3}:=\sum_{k=1}^{n_3}\maten{Y_3}{i_3}{k}[\mathbb{W}]_{i_1, i_2, k}.
$$
See \cite{Kolda2009} for more details.
 
 Being primarily interested in 3D problems, we will deal with matrices of the form $A\otimes B \otimes C$.
 We will need the following properties:
 \begin{itemize}
 %
 \item It holds
  \begin{equation} \label{eq:krontranspose} (A\otimes B \otimes C)^T = A^T \otimes B^T \otimes C^T. \end{equation}
 In particular, if $A$, $B$ and $C$ are symmetric, then also $A\otimes B \otimes C$ is symmetric.
 \item If $A,B$ and $C$ are nonsingular, then \begin{equation}\label{eq:kroninv}(A\otimes B \otimes C)^{-1}= A^{-1}\otimes B^{-1}\otimes C^{-1}.\end{equation}

\item { Let $\lambda_1, \ldots, \lambda_{n_1}$ denote the eigenvalues of $A$, $\mu_1, \ldots, \mu_{n_2}$, denote the eigenvalues of $B$ and $\nu_1, \ldots, \nu_{n_3}$ denote the eigenvalues of $C$. Then the eigenvalues of $A\otimes B \otimes C$ are $\lambda_i \mu_j \nu_k$, $i = 1,\ldots,n_1$, $j = 1,\ldots,n_2$, $k = 1,\ldots,n_3$.}
 In particular, if $A$, $B$ and $C$ are positive definite, then also $A\otimes B \otimes C$ is positive definite. \\
\item If $\mathbb{X}\in \mathbb{R}^{n_1\times n_2 \times n_3}$, then 
\begin{equation}
\label{eq:kronvec}
(A\otimes B \otimes C) \vec(\mathbb{X})=\vec(\mathbb{X}\times_1 A \times_2 B \times_3 C ).
\end{equation}
Thanks to this property the matrix $A\otimes B \otimes C$ does not need to be
formed to compute a matrix-vector product, resulting in a significant saving of memory and  
{ floating point operations (FLOPs)}.
\item It holds (from \eqref{eq:kroninv} and \eqref{eq:kronvec}):
\begin{equation}
(A\otimes B \otimes C)^{-1}\vec(\mathbb{X}) = \vec(\mathbb{X}\times_1 A^{-1} \times_2 B^{-1} \times_3 C^{-1}).
\label{eq:kron_vec_sol}
\end{equation}

 \end{itemize}

\section{Isogeometric analysis of the Stokes system}\label{sec:Stokes}

The   Stokes  system reads as
\begin{IEEEeqnarray*}{r"ll}
-\nabla \cdot(2 \nu \,\nabla^s \mathbf{u}) + \nabla p = \mathbf{f}  \ \ \  & \textrm{in}   \  & \Omega\\
\nabla \cdot \mathbf{u} = 0    \ \ \ & \textrm{in} \  & \Omega
\end{IEEEeqnarray*}
where $\nabla^s = \frac{1}{2}\left(
  \nabla + \nabla ^T \right)$, $\mathbf{u}$ is the velocity, $p$ is
the scalar pressure and $\nu >0$ is the kinematic viscosity. We
assume  $\nu\in L^\infty (\Omega)$ and $\mathbf{f} \in  \mathbf{L}^2
(\Omega)$.  
 We consider no-slip boundary conditions, that is  we impose $\mathbf{u}  = 0 \ \ \textrm{on} \ \partial\Omega$.
The pressure is determined up to a constant. 

The standard
(mixed) variational formulation of the problem reads: find $\mathbf{u}
\in \mathbf{H}^1_0(\Omega):=\{\mathbf{v} \in \mathbf{H}^1(\Omega) \mid \mathbf{v}=0 \ \mathrm{on} \ \partial\Omega \}$ and $p \in L^2_0(\Omega):=\{q \in L^2(\Omega) \mid \int_{\Omega}q\ \d\Omega =0 \}$ \B such that
\begin{IEEEeqnarray}{l"rcr}
\IEEEyesnumber\label{eq:mix-Stokes} \IEEEyessubnumber*
a(\vel{u},\vel{v}) + b (\vel{u}, p) = (\vel{f},\vel{v})_{L^2}  & \forall\, \mathbf{v}& \, \in  &\mathbf{H}^1_0(\Omega)\\
b(\vel{u}, q) = 0 & \forall \, q & \, \in & L^2_0(\Omega),
\end{IEEEeqnarray}
where $(\cdot,\cdot)_{L^2}$ denotes the $L^2$ scalar product while the bilinear forms $a(\cdot,\cdot)$ and $b(\cdot,\cdot)$ are defined as
\begin{align}
a(\vel{w},\vel{v})& =\int_{\Omega}2\nu\,\nabla^s\mathbf{w}:\nabla ^s \mathbf{v}\; \d \Omega \label{eq:a_form}\\
   b(\vel{v}, q)& = - \int_{\Omega} q\, \nabla \cdot \mathbf{v}\;\d\Omega  .\nonumber
\end{align}

The isogeometric Taylor-Hood (TH) discretization of Stokes system is a
standard Galerkin method for \eqref{eq:mix-Stokes} and reads: find $\vel{u}^{TH}_h\in V_{h,0}^{TH}$ and $p^{TH}_h\in Q_{h,0}^{TH}$ such that
\begin{IEEEeqnarray}{l'lcr}
\IEEEyesnumber\label{eq:TH_dis} \IEEEyessubnumber*
a(\vel{u}^{TH}_h, \vel{v}_h) + b(\vel{v}_h, p^{TH}_h) = (f, \vel{v}_h)_{L^2} & \forall\, \mathbf{v}_h& \, \in  & V_{h,0}^{TH},\\
b(\vel{u}^{TH}_h, q_h) = 0 & \forall\, q_h & \, \in  & Q_{h,0}^{TH},
\end{IEEEeqnarray}
where $V_{h,0}^{TH}$ and $Q_{h,0}^{TH}$ are defined as \eqref{eq:TH_V_basis} and \eqref{eq:TH_Q0}. 
A detailed analysis on the well posedness of this problem can be found
in \cite{Bazilevs2006, Bressan2010, Bressan2012}.

The isogeometric Raviart-Thomas (RT)  discretization we adopt is based on  a
Nitsche formulation for the weak imposition of the tangential
Dirichlet boundary condition to ensure stability (see \cite{Evans2013}). 

The method reads:  find
$\vel{u}^{ RT}_h \in V_{h,0}^{RT}$ and $p ^{  RT}_h\in Q_{h,0}^{RT}$  such that
\begin{IEEEeqnarray}{l'lcr}
\IEEEyesnumber\label{eq:RT_dis} \IEEEyessubnumber*
a(\vel{u}^{RT}_h, \vel{v}_h) + \sigma(\vel{u}^{RT}_h, \vel{v}_h)+ b(\vel{v}_h, p ^{ RT}_h) = (f, \vel{v}_h)_{L^2} & \forall\, \mathbf{v}_h& \, \in  & V_{h,0}^{RT},\\
b(\vel{u}^{RT}_h, q_h) = 0 & \forall\, q_h & \, \in  & Q_{h,0}^{RT},  
\end{IEEEeqnarray}
where  $ V_{h,0}^{RT}$ and $Q_{h,0}^{RT}$ are defined as \eqref{eq:RT_V_basis} and \eqref{eq:RT_Q0} and the bilinear form $\sigma(\cdot, \cdot)$ is defined as
\begin{equation}
\sigma(\vel{w}_h, \vel{v}_h):=\int_{\partial\Omega}
2\nu\left[\frac{ C_{pen}}{h}\vel{w}_h\! \cdot\! \vel{v}_h  - \left(\left(\nabla^s\vel{w}_h\right)\vel{n}\right)\cdot\vel{v}_h -\left(\left(\nabla^s\vel{v}_h\right)\vel{n}\right)\cdot\vel{w}_h\right] \;\d\Gamma, \label{eq:theta}
\end{equation}
with $C_{pen}>0$ a penalty parameter.
 The well-posedness of RT  discretization for Stokes problem and the choice of $C_{pen}$ are analysed in \cite{Evans2013}.

In practice, we build the linear system by replacing $Q_{h,0}^{TH}$ and $Q_{h,0}^{RT}$ by $Q_{h}^{TH}$ and $Q_{h}^{RT}$, respectively. This means that we do not incorporate the zero-mean-value constraint into the pressure space, since this will be taken care of by the Krylov iterative solver later.

Then, the discrete Stokes system  matrix is
\begin{equation}
\mathcal{A} = 	\begin{bmatrix}
       A & B^T          \\[0.3em]
	B & 0
     \end{bmatrix},
     \label{eq:Stokes_system}
\end{equation}
where 

$$
A = \begin{bmatrix}
A_{11} &  A_{12} & A_{13}\\
A_{21} &  A_{22} & A_{23}\\
A_{31} &  A_{32} & A_{33}
\end{bmatrix},
\quad B=[B_1 \ B_2 \ B_3] ,
$$
and for TH discretization, $i= 1,...,n_{V,r}^{TH}$, $j= 1,...,n_{V,s}^{TH}$, $r,s=1,2,3$ and $l=1,...,n_Q$
\begin{IEEEeqnarray*}{rl}
&[A_{rs}^{TH}]_{i,j}   := a(\vect{\phi}_{i}^{r,TH},\vect{\phi}_{j}^{s,TH}), \\
&[B_r^{TH}]_{l,j}  := b(\vect{\phi}_{j}^{r,TH}, \rho_l^{TH}),
\end{IEEEeqnarray*}
while for RT discretization, $i=1,...,n_{V,r}^{RT}$, $j=1,...,n_{V,s}^{RT}$, $r,s=1,2,3$ and  $l=1,...,n_Q$
\begin{IEEEeqnarray*}{rl}
&[A_{rs}^{RT}]_{i,j}  := a(\vect{\phi}_{i}^{r,RT},\vect{\phi}_{j}^{s,RT})+\sigma(\vect{\phi}_{i}^{r,RT},\ \vect{\phi}_{j}^{s,RT}), \\
&[B_r^{RT}]_{l,j} := b(\vect{\phi}_{j}^{r,RT}, \rho_l^{RT}),
\end{IEEEeqnarray*}
referring to Section \ref{sec:iga-spaces} for the notations of the basis.

In particular we have that for $k=1,2,3$ 
\begin{IEEEeqnarray}{rcl}
\left[A^{TH}_{kk}\right]_{i,j} & = &  \int_{\widehat{\Omega}}  \left( \nabla \Bmull{\vect{p}+1}{\vect{\alpha}, i}\right)^T\ \mathfrak{{C}}^{TH}_k\ \nabla \Bmull{\vect{p}+1}{\vect{\alpha}, j}\;\d\boldsymbol\eta ,
 \label{eq:Akk_TH}
\end{IEEEeqnarray}
where
\begin{equation}
\mathfrak{C}_k^{TH}(\boldsymbol \eta) = \nu(J_{\F}^{-1}J_{\F}^{-T}+D_kD_k^T)\ |\mathrm{det}(J_{\F})| \label{eq:Q_TH}
\end{equation}
and $D_k:=J_{\F}^{-1} \vel{e}_k$, while 

\begin{IEEEeqnarray}{rcl}
\left[A^{RT}_{kk}\right]_{i,j} & := &  \int_{\widehat{\Omega}}  \left(\nabla \Bmull{\vect{p}+\vel{e}_k}{\vect{\alpha} +\vel{e}_k, i} \right)^T\ \mathfrak{C}^{RT}_k \ \nabla \Bmull{\vect{p}+\vel{e}_k}{\vect{\alpha} +\vel{e}_k, j} \;\d\boldsymbol\eta   + \sigma \left( \left(\tilde{J}_{\F}  \vel{e}_k\Bmull{\vect{p}+\vel{e}_k}{\vect{\alpha} +\vel{e}_k, i } \right) \circ\F^{-1} ,\left( \tilde{J}_{\F}  \vel{e}_k\Bmull{\vect{p}+\vel{e}_k}{\vect{\alpha} +\vel{e}_k, j } \right) \circ\F^{-1}\right)\nonumber \\  
& & +\int_{[0,1]^3} 2\nu \left\{ \left[ R_k\left(\nabla \Bmull{\vect{p}+\vel{e}_k}{\vect{\alpha} +\vel{e}_k, i }{J}_{\F}^{-1}\right)\right]^s:\left[(\mathbb{H}_{\F}\mathbf{e}_k\Bmull{\vect{p}+\vel{e}_k}{\vect{\alpha} +\vel{e}_k, j }){J}_{\F}^{-1}\right]^s \right. \nonumber \\
& &\quad + \left. \left[(\mathbb{H}_{\F}\mathbf{e}_k\Bmull{\vect{p}+\vel{e}_k}{\vect{\alpha} +\vel{e}_k, i }){J}_{\F}^{-1}\right]^s:\left[ R_k\left(\nabla \Bmull{\vect{p}+\vel{e}_k}{\vect{\alpha} +\vel{e}_k, j }{J}_{\F}^{-1}\right)\right]^s\right. \nonumber \\
& & \quad + \left. \left|\left| \left[(\mathbb{H}_{\F}\mathbf{e}_k\Bmull{\vect{p}+\vel{e}_k}{\vect{\alpha} +\vel{e}_k, j }){J}_{\F}^{-1}\right]^s\right|\right|^2_F \right\} |\mathrm{det}(J_{\F})|  \;\d\boldsymbol\eta, \label{eq:Akk_RT}
\end{IEEEeqnarray}

where 
\begin{IEEEeqnarray}{c}
\mathfrak{C}_k^{RT}(\boldsymbol \eta) = \nu |\mathrm{det}(J_{\F})| J_{\F}^{-1}\left(\|R_k\|_2^2I+R_kR_k^T \right)J_{\F}^{-T},  \  \label{eq:Q_RT}
\end{IEEEeqnarray}

$\tilde{J}_{\F} := \left(\text{det}(J_{\F})\right)^{-1}
J_{\F}$, $R_k:= \tilde{J}_{\F}\ \vel{e}_k$ and $\mathbb{H}_{\F} $ is the (trivariate) Hessian tensor $\mathbb{H}_{\F} := \nabla \tilde{J}_{\F}$, with the convention that, for a given vector $\vect{w}\in\mathbb{R}^3$, 
$$ \mathbb{H}_{\F} \vect{w} := \begin{bmatrix} \left(\partial_{\eta_1}\tilde{J}_{\F}\right)\vect{w}, & \left(\partial_{\eta_2}\tilde{J}_{\F}\right)\vect{w}, & \left(\partial_{\eta_3}\tilde{J}_{\F}\right)\vect{w} \end{bmatrix}.$$ 

\B
 Here and
throughout, $\|\cdot\|_2$ denotes the Euclidean vector norm and  the
induced matrix norm,  $\|\cdot\|_F$ refers to the Frobenius matrix norm and  $[\ \cdot \ ]^s$ denotes the symmetric part.  
 Note that the last integral in \eqref{eq:Akk_RT} is zero when $\F$ is the identity map.

\section{Preconditioners for the whole system} \label{sec:SNSprec}

\newcommand{\precV}{P_V}
\newcommand{\precVc}[1]{P_{V,#1}}
\newcommand{\precQ}{P_Q}

In this section we introduce the preconditioning strategies that we consider in our numerical tests.
In what follows $\precV$ represents a preconditioning  matrix for the block $A$ and $\precQ$ a preconditioning matrix for $S$, where  

\begin{equation}
S = BA^{-1}B^T
\label{eq:schur}
\end{equation} 
is the (negative) Schur complement.

Once $\precV$ and $\precQ$ are constructed (this will be discussed in the next section), one can set up 
suitable preconditioners
to be used in the context of Krylov iterative methods \cite{Benzi2005,Elman2014,Wathen1993,Silvester1994}. 
We select three approaches.

{
In the first one, we consider the block diagonal preconditioner \cite{Murphy2000}
\begin{equation}
\mathcal{P}_{D}=\begin{bmatrix}
\precV & 0\\
0 & \precQ 
\end{bmatrix},
\label{eq:P_D}
\end{equation}
which, being symmetric and positive definite, preserves the symmetry of the problem. 
Therefore it can be coupled with a method for symmetric systems such as MINRES \cite{Paige1975}.
In the other two approaches, we respectively consider the block triangular \cite{Murphy2000} and constrained \cite{Keller2000} preconditioners  
\begin{equation}
\mathcal{P}_{T}=
\begin{bmatrix}
       \precV & B^T          \\[0.3em]
	0 & -\precQ
     \end{bmatrix} 
\label{eq:P_T}
\end{equation}
and
\begin{equation}
		\mathcal{P}_{C}=
     \begin{bmatrix}
       \precV & B^T     \\[0.3em]
	B & B\precV^{-1}B^T\!\!-\precQ
     \end{bmatrix},
\label{eq:P_C}
\end{equation}
both coupled with the GMRES method \cite{Saad1986}. We remark that $\mathcal{P}_{C}^{-1}$ can be applied efficiently thanks to the factorization
}
\begin{equation*}
\mathcal{P}_{C}^{-1}=
 \begin{bmatrix}
       I & - \precV^{-1}B^T         \\[0.3em]
	 0 & I
     \end{bmatrix}     
  \begin{bmatrix}
  I & 0 \\[0.3em]
  0 & -\precQ^{-1}
\end{bmatrix}     
\begin{bmatrix}
       I &0          \\[0.3em]
	-B & I
     \end{bmatrix}
     \begin{bmatrix}
       \precV^{-1} & 0          \\[0.3em]
	 0 & I
     \end{bmatrix},
\end{equation*}
where, here and throughout the paper, $I$ denotes the identity matrix of conforming order.

 \section{Preconditioners for $\precV$ and $\precQ$: 
   the simple choice}\label{sec:block-prec-simple-case}
\GS Our  choice for the preconditioning block $\precV$ has a   block diagonal structure:
\begin{equation}
\precV := \begin{bmatrix}
\precVc1 & 0 & 0\\
0 &  \precVc2 & 0\\
0 & 0 & \precVc3
\end{bmatrix};
\label{eq:precV}
\end{equation}
 the blocks $\precVc{k}$ are a  simplified version of to the blocks
 $A_{kk}$ where the geometry map and the kinematic viscosity are  replaced by the
 identity map and identity function, respectively. In other words, analogously to \eqref{eq:a_form} and \eqref{eq:theta}, we define in the parametric domain 
 $$
 \hat{a}(\hat{\vel{w}},\hat{\vel{v}}) := \int_{\widehat{\Omega}}2 \,\nabla^s\hat{\mathbf{w}}:\nabla ^s \hat{\mathbf{v}}\; \d \widehat{\Omega}, $$
 $$  \hat{\sigma}(\hat{\vel{w}}  , \hat{\vel{v}}  ):= \int_{\partial\widehat{\Omega}} 2
 \left[\frac{ C_{pen}}{h}\hat{\vel{w}}  \! \cdot\! \hat{\vel{v}}    - \left((\nabla^s\hat{\vel{w}} )\hat{\vel{n}}\right)\cdot\hat{\vel{v}}  -\left((\nabla^s\hat{\vel{v}} )\hat{\vel{n}}\right)\cdot\hat{\vel{w}} \right] \;\d\hat{\Gamma} ,
 $$
 where $\hat{\vel{n}}$ is the exterior normal to the boundary
 $\partial\widehat{\Omega}$.  Therefore for TH discretization,   according to \eqref{eq:Akk_TH}, for $i,j=1,...,n_{V,k}^{TH}$ and  $k=1,2,3$ we define
\begin{IEEEeqnarray}{l}
 \left[\precVc{k}^{TH}\right]_{i,j}   := \hat{a}(\vel{e}_k\Bmull{\vect{p}+1}{\vect{\alpha}, i} , \vel{e}_k \Bmull{\vect{p}+1}{\vect{\alpha}, j} )=
\int_{\widehat{\Omega}} \  \left(\nabla \Bmull{\vect{p}+1}{\vect{\alpha}, i} \right)^T   \, \mathfrak{T}_k\, \nabla \Bmull{\vect{p}+1}{\vect{\alpha} , j} \;\d\boldsymbol\eta, \label{eq:precVth}
\end{IEEEeqnarray}
where  $\mathfrak{T}_k  = I+\vel{e}_k\vel{e}_k^T$, while for RT discretization, according to \eqref{eq:Akk_RT},    for $i,j=1,...,n_{V,k}^{RT}$ and $k=1,2,3$  we define 
\begin{IEEEeqnarray}{rl}
\left[ \precVc{k}^{RT}\right]_{i,j}   :& = \hat{a}(\mathbf{e}_k\Bmull{\vect{p}+\vel{e}_k}{\vect{\alpha} +\vel{e}_k, i} , \mathbf{e}_k\Bmull{\vect{p}+\vel{e}_k}{\vect{\alpha} +\vel{e}_k, j} )+ \hat{\sigma}\left(\mathbf{e}_k\Bmull{\vect{p}+\vel{e}_k}{\vect{\alpha} +\vel{e}_k, i} ,\ \mathbf{e}_k\Bmull{\vect{p}+\vel{e}_k}{\vect{\alpha} +\vel{e}_k, j} \right)\nonumber \\
& = \int_{\widehat{\Omega}} \ \left( \nabla \Bmull{\vect{p}+\vel{e}_k}{\vect{\alpha} +\vel{e}_k, i} \right)^T\, \mathfrak{T}_k\, \nabla \Bmull{\vect{p}+\vel{e}_k}{\vect{\alpha} +\vel{e}_k, j} \;\d\boldsymbol\eta +\hat{\sigma}\left(\mathbf{e}_k\Bmull{\vect{p}+\vel{e}_k}{\vect{\alpha} +\vel{e}_k, i} ,\ \mathbf{e}_k\Bmull{\vect{p}+\vel{e}_k}{\vect{\alpha} +\vel{e}_k, j} \right).\nonumber \\
\label{eq:precVrt}
\end{IEEEeqnarray}
 \B

Exploiting the tensor product structure of the basis functions, we can write
\begin{IEEEeqnarray}{c}
\IEEEyesnumber\label{eq:prec3D_TH} \IEEEyessubnumber* 
\precVc{1}^{TH}\!  =  K_3^{TH}\!  \otimes\!  M_2^{TH}\!  \otimes\!  M_1^{TH}   +  M_3^{TH}\!  \otimes\!  K_2^{TH}\!  \otimes \!  M_1^{TH}  +  2 M_3^{TH}\!  \otimes \!  M_2^{TH}\!  \otimes\! K_1^{TH},  \\  
\precVc{2}^{TH}\!   =  K_3^{TH}\!  \otimes\!  M_2^{TH}\!  \otimes\!  M_1^{TH}   +  2M_3^{TH}\!  \otimes\!  K_2^{TH}\!  \otimes\!  M_1^{TH}  +  M_3^{TH}\!  \otimes\!  M_2^{TH}\!  \otimes\!  K_1^{TH},  \\ 
\precVc{3}^{TH}\!   =  2 K_3^{TH}\!  \otimes\!  M_2^{TH}\!  \otimes \! M_1^{TH}  +   M_3^{TH}\!  \otimes\!  K_2^{TH}\!  \otimes\!  M_1^{TH}   +  M_3^{TH}\!  \otimes\!  M_2^{TH}\!  \otimes\!  K_1^{TH},
\end{IEEEeqnarray}
and
\begin{IEEEeqnarray}{c}
\IEEEyesnumber\label{eq:prec3D_RT}  \IEEEyessubnumber*
\precVc{1}^{RT}\!   =  \widetilde {{K}}_3^{RT}\!  \otimes\!  \widetilde {{M}}_2^{RT}\!  \otimes\! M_1^{RT} + \widetilde {M}_3^{RT} \! \otimes\!  \widetilde {K}_2^{RT}\!  \otimes M_1^{RT} + 2 \widetilde{M}_3^{RT}\!  \otimes\!  \widetilde{M}_2^{RT}\!  \otimes\!  K_1^{RT},  \\ 
\precVc{2}^{RT} \!  =  \widetilde{K}_3^{RT}\!  \otimes\!  M_2^{RT}\!  \otimes\!  \widetilde{M}_1^{RT} + 2\widetilde{M}_3^{RT}\!  \otimes\!  K_2^{RT}\!  \otimes\!  \widetilde{M}_1^{RT} + \widetilde{M}_3^{RT}\! \otimes\!  M_2^{RT}\!  \otimes\!  \widetilde{K}_1^{RT},   \\ 
\precVc{3}^{RT}\!   =  2 K_3^{RT} \! \otimes\!  \widetilde{M}_2^{RT}\!  \otimes\!  \widetilde{M}_1^{RT} + M_3^{RT}\!  \otimes\!  \widetilde{K}_2^{RT}\! \otimes\!  \widetilde{M}_1^{RT} + M_3^{RT}\!  \otimes\!  \widetilde{M}_2^{RT}\!  \otimes \! \widetilde{K}_1^{RT},
\end{IEEEeqnarray} 
where for $k=1,2,3$ the univariate matrix factors are
\begin{IEEEeqnarray*}{l }
\left[K_k^{TH}\right]_{l,s} \!  = \!  \int_{[0,1]}\!\!\!\!\! (\bunivv{p+1}{\alpha_k,l})'(\eta_k) (\bunivv{p+1}{\alpha_k,s})'(\eta_k)\, \d\eta_k,   \nonumber \\
\left[M_k^{TH}\right]_{l,s} \!  = \! \int_{[0,1]}\!\!\!\!\! \bunivv{p+1}{\alpha_k,l}(\eta_k) \ \bunivv{p+1}{\alpha_k,s}(\eta_k)\, \d\eta_k,   \nonumber
\end{IEEEeqnarray*}
for $l,s=2,...,m_{\alpha_k}^{p+1}\! - \! 1$, and 
\begin{IEEEeqnarray*}{ll}
\left[K_{k}^{RT}\right]_{l,s} \!  = &\!  \int_{[0,1]}\!\!\!\!\! (\bunivv{p+1}{\alpha_k +1,l})'(\eta_k) (\bunivv{p+1}{\alpha_k+1,s})'(\eta_k)\, \d\eta_k,  \\
\left[M_{k}^{RT}\right]_{l,s} \!  = &\! \int_{[0,1]}\!\!\!\!\! \bunivv{p+1}{\alpha_k+1,l}(\eta_k) \ \bunivv{p+1}{\alpha_k+1,s}(\eta_k)\, \d\eta_k,    \nonumber\\
\end{IEEEeqnarray*}
for $l,s=2,...,\dimpu{k}\! - \! 1$, and finally 
\begin{IEEEeqnarray*}{ll}
 \left[\widetilde{K}_{k}^{RT}\right]_{l,s}  = &  \int_{[0,1]}\!\!\!\!\! (\bunivv{p}{\alpha_k,l})'(\eta_k) (\bunivv{p}{\alpha_k,s})'(\eta_k)\, \d\eta_k  \\& \quad -\bigg[ (\bunivv{p}{\alpha_k,l})'(1) \bunivv{p}{\alpha_k,s}(1) 
  - (\bunivv{p}{\alpha_k,l})'(0) \bunivv{p}{\alpha_k,s}(0)  + (\bunivv{p}{\alpha_k,s})'(1) \bunivv{p}{\alpha_k,l}(1) \\
 &\, \qquad - (\bunivv{p}{\alpha_k,s})'(0) \bunivv{p}{\alpha_k,l}(0)   -2\frac{C_{pen}}{h}\big(\bunivv{p}{\alpha_k,l}(1)\bunivv{p}{\alpha_k,s}(1)   + 
 \bunivv{p}{\alpha_k,l}(0)\bunivv{p}{\alpha_k,s}(0) \big) \bigg],  \nonumber \\
\left[\widetilde{M}_k^{RT}\right]_{l,s} =&  \int_{[0,1]}\!\!\!\!\! \bunivv{p}{\alpha_k,l}(\eta_k) \ \bunivv{p}{\alpha_k,s}(\eta_k)\, \d\eta_k
\end{IEEEeqnarray*}
for $l,s=1,...,\dimp{k}$.

 Now we consider the construction of $\precQ$. 
\GS The Schur complement $S$ is spectrally
equivalent to the (weighted) pressure mass matrix
\begin{equation}
  \label{eq:weighted-pressure-mass}
  \begin{aligned}
    &  \left[Q^{TH}\right]_{i,j} := \int_{\Omega}\nu^{-1}\rho_i^{TH}\rho_j^{TH}\;\d\Omega = \int_{\widehat{\Omega}}\nu^{-1}\Bmull{\vect{p}}{\vect{\alpha}, i}\Bmull{\vect{p}}{\vect{\alpha}, j}\ g^{TH}\;\d\vect{\eta}, \\
& \left[Q^{RT}\right]_{i,j} := \int_{\Omega}\nu^{-1}\rho_i^{RT}\rho_j^{RT}\;\d\Omega = \int_{\widehat{\Omega}}\nu^{-1}\Bmull{\vect{p}}{\vect{\alpha}, i}\Bmull{\vect{p}}{\vect{\alpha}, j}\ g^{RT}\;\d\vect{\eta},
  \end{aligned}
\end{equation}
for $i,j =1,...,n_{Q}$, where $g^{TH}(\vect{\eta}):= |\mathrm{det}(J_{\F})|$ and
$g^{RT}(\vect{\eta}):= |\mathrm{det}(J_{\F})|^{-1}$. The equivalence
holds uniformly with respect to a variable kinematic viscosity $\nu$, see
\cite{grinevich2009iterative}. \B
However, as for $\precV$, in our simple approach we drop the
dependence on $\nu$ and the geometry mapping,  by selecting: 
\begin{equation*}
\left[\precQ^{TH}\right]_{i,j} :=\left[\precQ^{RT}\right]_{i,j}  :=  \int_{\widehat{\Omega}} \Bmull{\vect{p}}{\vect{\alpha}, i}\Bmull{\vect{p}}{\vect{\alpha}, j} \; \d \,\vect{\eta} \quad i,j =1,...,n_Q;
\end{equation*}
as for \eqref{eq:precVth} and \eqref{eq:precVrt}.
Exploiting again the tensor product structure of the basis we can write $\precQ$ as
\begin{equation}
\precQ = M_3 \otimes M_2 \otimes M_1,
\label{eq:precQ}
\end{equation}
where for $k=1,2,3$ and for  $ l,s=1,...,n_Q$
\begin{equation*}
\left[M_k\right]_{l,s} =  \int_{[0,1]} \bunivv{p}{\alpha_k,l}(\eta_k) \  \bunivv{p}{\alpha_k,s}(\eta_k)\, \d\eta_k.
\end{equation*}

\subsection{Spectral properties}
\label{sec:spec_prop}

\GS
A desirable requirement for all the strategies proposed in Section \ref{sec:SNSprec} is that $\precV$ and $\precQ$ are spectrally equivalent to $A$ and $Q$, respectively.
We analyse here the spectral properties of $\precV^{-1} A$ and $\precQ^{-1} Q$. We refer to \cite[Section 4.2]{Elman2014}, where such properties are used to derive explicit bounds for the eigenvalues of the preconditioned system $\mathcal{P}^{-1} \mathcal{A}$, in the special case of the block diagonal preconditioner. In particular, if the eigenvalues of $\precV^{-1} A$ and $\precQ^{-1} Q$ are bounded away from 0 and infinity uniformly with respect to $h$ and $p$, then so are the eigenvalues of the full system. 

The bilinear forms $a(\cdot,\cdot)$ and $\hat{a}(\cdot,\cdot)$ satisfy
\begin{IEEEeqnarray}{rcll}
 2 C_{\text{Korn}} \nu_{\min} \left|\vel{v}\right|^2_{\mathbf{H}^1(\Omega)}  \leq  & \, a(\vel{v},\vel{v}) \, & \leq  2 \nu_{\max}  \left|\vel{v}\right|^2_{\mathbf{H}^1(\Omega)} \qquad & \forall \, \mathbf{v} \, \in  \mathbf{H}^1_0(\Omega), \label{eq:a_bounds1}\\
  2  \hat {C}_{\text{Korn}}  \left|\hat{\vel{v}}\right|^2_{\mathbf{H}^1(\widehat{\Omega})}  \leq & \, \hat{a}(\hat{\vel{v}},\hat{\vel{v}}) \,& \leq  2   \left|\hat{\vel{v}}\right|^2_{\mathbf{H}^1(\widehat{\Omega})} \qquad & \forall \, \hat{\mathbf{v}} \, \in  \mathbf{H}^1_0(\widehat{\Omega}), \label{eq:a_bounds2}
  \end{IEEEeqnarray}
where $\left|\ \cdot\ \right|_{\mathbf{H}^1(\cdot)}$ denotes the usual
$\mathbf{H}^1$-seminorm, $C_{\text{Korn}}$ and $\hat
{C}_{\text{Korn}}$ 
are the Korn constants (for homogeneous Dirichlet
boundary conditions on the whole boundary we have $C_{\text{Korn}} = \hat
{C}_{\text{Korn}}   = 1/2$, see  \cite[Section 6.3]{Ciarlet1988})  and 
$$ \nu_{\min} := \inf_{\Omega} \nu, \qquad \nu_{\max} := \sup_{\Omega} \nu.
$$
%
We  also have that the bilinear forms $a(\cdot, \cdot) + \sigma(\cdot, \cdot)$ and $\hat{a}(\cdot, \cdot) + \hat{\sigma}(\cdot, \cdot)$ in the discrete spaces satisfy
\begin{align}  C_{1} \| \vel{v}_h  \|^2_{\mathbf{H}_{pen}^1( {\Omega})} \leq a( {\vel{v}}_h, {\vel{v}}_h) + \sigma( {\vel{v}}_h, {\vel{v}}_h) \leq  C_{2}\| \vel{v}_h \|^2_{\mathbf{H}_{pen}^1( {\Omega})} \qquad \forall \, \mathbf{v}_h \, \in   V^{RT}_{h,0}, \label{eq:RT-weak-Korn} \\
 \hat{C}_{1}\| \hat{\vel{v}}_h  \|^2_{\mathbf{H}_{pen}^1( \widehat{\Omega})} \leq \hat{a}( \hat{{\vel{v}}}_h, \hat{{\vel{v}}}_h) + \hat{\sigma}( \hat{{\vel{v}}}_h, {\hat{\vel{v}}}_h) \leq \hat{C}_{2}\| \hat{\vel{v}}_h \|^2_{\mathbf{H}_{pen}^1( \widehat{\Omega})} \qquad \forall \, \hat{\mathbf{v}}_h \, \in   \hat{V}^{RT}_{h,0},  \label{eq:RT-weak-Korn-parametric}
  \end{align}
  where the norm $\| \cdot \|_{\mathbf{H}_{pen}^1( \widehat{\Omega})} $ is defined as
   $
  \|\cdot \|^2_{\mathbf{H}_{pen}^1( {\Omega})}:=\| \cdot\|^2_{\mathbf{H}^1( {\Omega})} + \frac{C_{pen}}{h}\| \cdot \|^2_{L^2({\partial\Omega})}
  $
  and $C_1$, $C_2$, $\hat{C}_1$ and $\hat{C}_2$ are constants depending on 
  $C_{pen}$ and on the inverse estimate constants of the discrete
  spaces   $ V^{RT}_{h,0}$ and  $\hat{V}^{RT}_{h,0}$ respectively:
  these inequalities follows from   \cite[Lemma 6.2]{Evans2013},  \cite[Lemma 6.3]{Evans2013},\cite[Eq. (6.9)]{Evans2013} and the equivalence between   $\parallel \cdot \parallel_{\mathbf{H}_{pen}^1(  {\Omega})}$ and 
    $
  \left| \  \cdot \ \right|^2_{\mathbf{H}_{pen}^1( {\Omega})}:= \left|\ \cdot\ \right| ^2_{\mathbf{H}^1( {\Omega})} + \frac{C_{pen}}{h}\parallel \cdot \parallel^2_{L^2({\partial\Omega})}  $.

We start by proving bounds on the eigenvalues of $\precV^{-1} A$.
 
\begin{thm}
\label{prop:1}
It holds 
\begin{equation}
 \delta \leq \lambda_{\min} \left(\precV^{-1} A \right), \qquad \lambda_{\max} \left(\precV^{-1} A\right) \leq \Delta,
\label{eq:specV}
\end{equation}
where $\delta$ and $\Delta$ are positive constants that do not depend on $h$ or on $p$.
\end{thm}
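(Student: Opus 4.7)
The plan is to establish, with constants independent of $h$ and $p$, that both quadratic forms $\vect{v}^T A \vect{v}$ and $\vect{v}^T \precV \vect{v}$ are equivalent to the parametric $\mathbf{H}^1$-seminorm (or to $\|\cdot\|_{\mathbf{H}^1_{pen}(\widehat{\Omega})}$ in the RT case) of the associated spline vector field $\hat{\vel{v}}_h$. Since \eqref{eq:a_bounds1}--\eqref{eq:RT-weak-Korn-parametric} already provide such equivalences on the continuous level, the argument never invokes a discrete inverse estimate, so no $p$-dependence can enter through that route.

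First I would analyse the preconditioner. Writing $\hat{\vel{v}}_h = \sum_k \vel{e}_k \hat v_{h,k}$ and expanding $\nabla^s$ componentwise gives the pointwise identity $2\,\nabla^s(\vel{e}_k \hat v_{h,k}):\nabla^s(\vel{e}_k \hat v_{h,k}) = |\nabla \hat v_{h,k}|^2 + (\partial_k \hat v_{h,k})^2$, hence
\[
\sum_{k=1}^3 \hat a(\vel{e}_k \hat v_{h,k},\vel{e}_k \hat v_{h,k}) = |\hat{\vel{v}}_h|^2_{\mathbf{H}^1(\widehat{\Omega})} + \sum_{k=1}^3 \|\partial_k \hat v_{h,k}\|_{L^2(\widehat{\Omega})}^2,
\]
which is sandwiched between $|\hat{\vel{v}}_h|^2_{\mathbf{H}^1(\widehat{\Omega})}$ and $2|\hat{\vel{v}}_h|^2_{\mathbf{H}^1(\widehat{\Omega})}$. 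Since $\precV$ is block-diagonal, $\vect{v}^T \precV^{TH} \vect{v}$ equals the left-hand side, so $\vect{v}^T \precV^{TH} \vect{v} \sim |\hat{\vel{v}}_h|^2_{\mathbf{H}^1(\widehat{\Omega})}$. The RT variant adds $\sum_k \hat\sigma(\vel{e}_k \hat v_{h,k}, \vel{e}_k \hat v_{h,k})$; invoking \eqref{eq:RT-weak-Korn-parametric} componentwise yields $\vect{v}^T \precV^{RT} \vect{v} \sim \|\hat{\vel{v}}_h\|^2_{\mathbf{H}^1_{pen}(\widehat{\Omega})}$.

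Next I would treat $A$. By \eqref{eq:a_bounds1} one has $\vect{v}^T A^{TH}\vect{v} = a(\vel{v}_h,\vel{v}_h) \sim \nu \,|\vel{v}_h|^2_{\mathbf{H}^1(\Omega)}$, and a standard change of variables through the fixed nonsingular parametrization $\F$ gives $|\vel{v}_h|^2_{\mathbf{H}^1(\Omega)} \sim |\hat{\vel{v}}_h|^2_{\mathbf{H}^1(\widehat{\Omega})}$ with pullback constants depending only on $\|J_{\F}^{\pm 1}\|_\infty$ and $\|\det J_{\F}\|_\infty$. For the RT case one uses \eqref{eq:RT-weak-Korn} together with the Piola pullback, yielding $\vect{v}^T A^{RT}\vect{v} \sim \|\hat{\vel{v}}_h\|^2_{\mathbf{H}^1_{pen}(\widehat{\Omega})}$ with constants additionally depending on $\|\mathbb{H}_{\F}\|_\infty$. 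Chaining the two equivalences produces \eqref{eq:specV}, with $\delta$ and $\Delta$ depending on $\nu_{\min}, \nu_{\max}$, $\F$, and (in the RT case) on $C_{pen}$, but not on $h$ or $p$.

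The most delicate point I foresee is the pullback equivalence in the RT case: the Piola transform $\vel{v}_h = (\det J_{\F})^{-1} J_{\F} \hat{\vel{v}}_h \circ \F^{-1}$ is not a pointwise scaling, so $\nabla \vel{v}_h$ produces lower-order terms involving $\mathbb{H}_{\F}$ that couple to $\hat{\vel{v}}_h$ itself rather than to $\nabla \hat{\vel{v}}_h$ alone (exactly the extra integrals appearing in \eqref{eq:Akk_RT}). These zeroth-order contributions can be absorbed by the $L^2(\widehat{\Omega})$ component of $\|\cdot\|_{\mathbf{H}^1_{pen}(\widehat{\Omega})}$, or, where needed at the boundary, by the penalty term $\tfrac{C_{pen}}{h}\|\cdot\|^2_{L^2(\partial\widehat{\Omega})}$ via a Friedrichs-type inequality on $\widehat{\Omega}$ applied to functions in $\hat V^{RT}_{h,0}$. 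In either case, no $p$-dependent constant is introduced, and the final bounds in \eqref{eq:specV} hold uniformly in both $h$ and $p$.
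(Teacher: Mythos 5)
Your proposal is correct and follows essentially the same route as the paper: reduce to Rayleigh quotients, bound $\vect{v}^TA\vect{v}$ and $\vect{v}^T\precV\vect{v}$ by the physical and parametric (penalized) $\mathbf{H}^1$ (semi)norms via \eqref{eq:a_bounds1}--\eqref{eq:RT-weak-Korn-parametric}, and then prove the equivalence of those norms by a change of variables through $\F$ (with the Piola transform contributing the $\mathbb{H}_{\F}$ lower-order terms, absorbed into the full $\mathbf{H}^1_{pen}$ norm exactly as you indicate). Your only deviation — computing the identity $2\,\nabla^s(\vel{e}_k\hat v_{h,k}):\nabla^s(\vel{e}_k\hat v_{h,k}) = \|\nabla\hat v_{h,k}\|_2^2+(\partial_k\hat v_{h,k})^2$ explicitly instead of invoking the parametric Korn inequality componentwise — is a cosmetic simplification that yields the same two-sided bound.
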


\begin{proof}
We begin with TH discretization case, proving \eqref{eq:specV} for $
\delta = \delta^{TH}$ and $ \Delta = \Delta^{TH}$. 
Let $\hat{\vel{v}}_h \in \hat{V}^{TH}_{h,0}$ and let ${\vel{v}}_h := \hat{\vel{v}}_h \circ \F^{-1} \in {V}^{TH}_{h,0}$. 
Moreover, let $\vect{v}$ be the coordinate vector of $\hat{\vel{v}}_h$ with respect to the basis \eqref{eq:basisRT_V1}. 
By the Courant-Fischer theorem, \eqref{eq:specV}  is equivalent to find  $\delta^{TH}$ and $\Delta^{TH}$ such that
$$
\delta^{TH}\leq \frac{\vect{v}^TA^{TH}\vect{v}}{\vect{v}^T\precV^{TH}\vect{v}}  \leq \Delta^{TH} . $$
%
%
Using \eqref{eq:a_bounds1}, we have $$2 C_{\text{Korn}} \nu_{\min}
\left|\vel{v_h}\right|^2_{\mathbf{H}^1(\Omega)}  \leq   \,
\vect{v}^TA^{TH}\vect{v}  \, \leq  2 \nu_{\max}
\left|\vel{v_h}\right|^2_{\mathbf{H}^1(\Omega)}.$$ Using
\eqref{eq:a_bounds2} and decomposing $ \hat{\vel{v}}_h  =
\hat{\vel{v}}_{h,1} + \hat{\vel{v}}_{h,2} + \hat{\vel{v}}_{h,3}  $,
where $\hat{\vel{v}}_{h,k}$ are the cartesian components of
$\hat{\vel{v}}_{h}$, we have for $k=1,2,3$,
$$  2  \hat {C}_{\text{Korn}}  \left|\hat{\vel{v}}_{h,k}\right|^2_{\mathbf{H}^1(\widehat{\Omega})}  \leq  \, \hat{a}(\hat{\vel{v}}_{h,k},\hat{\vel{v}}_{h,k}) \, \leq  2   \left|\hat{\vel{v}}_{h,k}\right|^2_{\mathbf{H}^1(\widehat{\Omega})};
$$
summing the three bounds above and using $
\hat{a}(\hat{\vel{v}}_{h,1},\hat{\vel{v}}_{h,1})
+\hat{a}(\hat{\vel{v}}_{h,2},\hat{\vel{v}}_{h,2})
+\hat{a}(\hat{\vel{v}}_{h,3},\hat{\vel{v}}_{h,3}) =
\vect{v}^T\precV^{TH}\vect{v} $ yields 
$$  2  \hat {C}_{\text{Korn}}
\left|\hat{\vel{v}}_{h}\right|^2_{\mathbf{H}^1(\widehat{\Omega})}  \leq 
\, \vect{v}^T\precV^{TH}\vect{v} \, \leq  2   \left|\hat{\vel{v}}_{h}\right|^2_{\mathbf{H}^1(\widehat{\Omega})};
$$
in conclusion it suffices to prove 
\begin{equation}
  \label{eq:TH-CF-equivalence} 
\frac{\delta^{TH}}{C_{\text{Korn}}{\nu}_{\min}} \leq \frac{\left|
    \vel{v}_h \right|^2_{\mathbf{H}^1(\Omega)}}{\left|\hat{\vel{v}}_h
  \right|_{\mathbf{H}^1(\widehat{\Omega})}^2 } \leq\frac{\hat C_{\text{Korn}}\Delta^{TH}}{\nu_{\max}},
\end{equation}
for suitable  $\delta^{TH}$ and $\Delta^{TH}$ and for all all $\hat{\vel{v}}_h\in \hat{V}_{h,0}^{TH}$ with $ {\vel{v}}_h =:
\hat{\vel{v}}_h\circ\F^{-1} \in {V}_{h,0}^{TH}$. In other words, we just need to prove the equivalence between $\left|\vel{v}_h\right|_{\mathbf{H}^1(\Omega)}$ and $\left|\hat{\vel{v}}_h\right|_{\mathbf{H}^1(\widehat{\Omega})}$.
One of the two  bounds is 
\begin{IEEEeqnarray}{ll}
 \left|\vel{v}_h \right|^2_{\mathbf{H}^1(\Omega)} & = \int_{\Omega} \left\| \nabla \vel{v}_h \right\|_{F}^2 \; \d \Omega = \int_{\hat \Omega}  \left|\det\left(J_{\F}\right)\right|
\left\| \nabla \hat{\vel{v}}_h  J_{\F}^{-1} \right\|_F^2
\; \d\boldsymbol\eta \nonumber \\
\label{eq:TH_bound1}
& \leq \sup_{\hat \Omega}
\left\{\left|\det\left(J_{\F}\right)\right| \left\|J_{\F}^{-1}\right\|^2_2\right\}
\int_{\hat \Omega} \left\| \nabla \hat{\vel{v}}_h \right\|^2_F \; \d\boldsymbol\eta   = \sup_{\hat \Omega}
\left\{\left|\det\left(J_{\F}\right)\right| \left\|J_{\F}^{-1}\right\|^2_2\right\}
\left|\hat{\vel{v}}_h \right|^2_{\mathbf{H}^1(\widehat{\Omega})}    , \nonumber \\
\end{IEEEeqnarray}
where we used the fact that, given any two matrices $X,Y$ with
conforming dimensions, it holds $\left\| X Y\right\|_F^2 \leq
\left\|X\right\|^2_F \left\|Y \right\|^2_2$. For the other bound, just
observe that   $\hat{\vel{v}}_h :=  {\vel{v}}_h\circ\F$, and then
\begin{equation}
\label{eq:TH_bound2}
 \left|\hat{\vel{v}}_h \right|^2_{\mathbf{H}^1(\widehat{\Omega})} \leq \sup_{  \Omega}
\left\{\left|\det\left(J_{\F^{-1}}\right)\right| \left\|J_{\F^{-1}}^{-1}\right\|^2_2\right\} 
\left|\vel{v}_h \right|^2_{\mathbf{H}^1(\Omega)}  =  \sup_{  \hat \Omega} 
\left\{ \frac{ \left\|J_{\F}\right\|^2_2 }{ \left|\det\left(J_{\F}\right)\right| } \right\} 
\left|\vel{v}_h \right|^2_{\mathbf{H}^1(\Omega)};
\end{equation}
This conclude the proof for the TH case. 

The  RT case is similar, we just highlight the differences. 
As above, from \eqref{eq:RT-weak-Korn} and
\eqref{eq:RT-weak-Korn-parametric}, we get 
\begin{equation}
  \label{eq:RT-equiv}
C_{1} \| \vel{v}_h  \|^2_{\mathbf{H}_{pen}^1( {\Omega})} \leq \vect{v}^TA^{RT}\vect{v}\leq  C_{2}\| \vel{v}_h \|^2_{\mathbf{H}_{pen}^1( {\Omega})},
\end{equation}

\begin{equation}
  \label{eq:RT-equiv-parametric}
  \hat{C}_{1}\| \hat{\vel{v}}_h  \|^2_{\mathbf{H}_{pen}^1(
    \widehat{\Omega})} \leq \vect{v}^T\precV^{RT}\vect{v} \leq \hat{C}_{2}\| \hat{\vel{v}}_h \|^2_{\mathbf{H}_{pen}^1( \widehat{\Omega})},
\end{equation}
where $\hat{\vel{v}}_h\in \hat{V}_{h,0}^{RT}$, $
{\vel{v}}_h=(( \text{det}(J_{\F}))^{-1}
J_{\F}\hat{\vel{v}}_h)\circ\F^{-1}=(\tilde{J}_{\F}\hat{\vel{v}}_h)\circ\F^{-1}
\in {V}_{h,0}^{RT}$ and  $\vect{v}$ is the common coordinate
vector. 
Then, we look for  $\delta^{RT}$ and $\Delta^{RT}$ such that

$$
\delta^{RT}\frac{\hat{C}_2 }{C_1 } \leq \frac{\left\| \vel{v}_h \right\| ^2_{\mathbf{H}_{pen}^1(\Omega)}}{\left\|\hat{\vel{v}}_h  \right\|_{\mathbf{H}_{pen}^1(\widehat{\Omega})}^2 } \leq\frac{\hat{C}_1 }{C_2 } \Delta^{RT}.
$$
 Direct computation shows that $ \nabla \left( \tilde{J}_{\F}\hat{\vel{v}}_h \right) = \tilde{J}_{\F}\nabla\hat{\vel{v}}_h + \mathbb{H}_{\F}\hat{\vel{v}}_h $, where $\tilde{J}_{\F}$ and $\mathbb{H}_{\F}$ as in Section \ref{sec:Stokes}.
It holds
\begin{IEEEeqnarray*}{lcl}
\left| \vel{v}_h\right|^2_{\mathbf{H}^1(\Omega)} & = & \int_{\Omega} \left\| \nabla \vel{v}_h \right\|^2_F \; \d \Omega = 
\int_{\widehat{\Omega}} \left| \det(J_{\F})\right| \left\| \nabla \left( \tilde{J}_{\F}\hat{\vel{v}}_h \right) J_{\F}^{-1} \right\|_F^2 \d\widehat{\Omega}\nonumber \\
& \leq &\ 2 \int_{\widehat{\Omega}}\left| \det(J_{\F}) \right|\left( \left\| \tilde{J}_{\F} \nabla\hat{\vel{v}}_h J_{\F}^{-1} \right\|_F^2 + \left\| \left(\mathbb{H}_{\F}\hat{\vel{v}}_h\right) J_{\F}^{-1} \right\|_F^2 \right)\d\widehat{\Omega} \nonumber \\
& \leq &\ 2 \sup_{\widehat{\Omega}} \left\{ \left| \det(J_{\F})\right|\left\| J_{\F}^{-1}\right\|_2^2  \left\|\tilde{J}_{\F}\right\|^2_{2}  , \left| \det(J_{\F})\right| \left\|J_{\F}^{-1}\right\|_2^2 \left\| \mathbb{H}_{\F}\right\|^2_{F} \right\} \left\|\hat{\vel{v}}_h\right\|^2_{\mathbf{H}^1(\widehat{\Omega})},\\
\end{IEEEeqnarray*}
where $\left\| \mathbb{H}_{\F}\right\|^2_{F}$ is the Frobenius tensor norm of $\mathbb{H}_{\F}$. Moreover, it holds
\begin{IEEEeqnarray*}{lcl}
\left\| \vel{v}_h\right\|^2_{L^2(\Omega)} & = & \int_{\Omega}|\vel{v}_h|^2\d\Omega = \int_{\widehat{\Omega}}\left|\det(J_{\F}) \right| 
\left\|\tilde{J}_{\F}\hat{\vel{v}}_h \right\|_2^2
\d\widehat{\Omega}  \leq  \ \sup_{\widehat{\Omega}} \left\{ \left|\det(J_{\F}) \right| \left\|\tilde{J}_{\F}\right\|^2_{2} \right\} \left\|\hat{\vel{v}}_h \right\|^2_{L^2(\widehat{\Omega})} ,\\
\left\| \vel{v}_h\right\|^2_{L^2(\partial\Omega)} & = & \int_{\partial\Omega}|\vel{v}_h|^2\d\Gamma \leq \left\|\text{cof}(\nabla\F)\right\|_{L^{\infty}(\widehat{\Omega}),l} \int_{\partial\widehat{\Omega}}
\left\| \tilde{J}_{\F}\hat{\vel{v}}_h \right\|_2^2
\d\hat{\Gamma} \\
& \leq & \ \left\|\text{cof}(\nabla\F)\right\|_{L^{\infty}(\widehat{\Omega}),l}\sup_{\partial\widehat{\Omega}} \left\{ \left\|\tilde{J}_{\F}\right\|^2_{2} \right\} \left\|\hat{\vel{v}}_h\right\|^2_{L^2(\partial\widehat{\Omega})} 
\end{IEEEeqnarray*}
where $\text{cof}(\cdot)$ refers to the matrix of the cofactors and $\left\|\cdot \right\|_{L^{\infty}(\widehat{\Omega}),l}$ is defined as in \cite{Evans2013trace}. 

By observing that
$ \hat{\vel{v}}_h = (\tilde{J}_{\F^{-1}} \vel{v}_h)\circ\F$, we can use similar argument to show that 
\begin{IEEEeqnarray*}{rcl}
\left| \hat{\vel{v}}_h\right|^2_{\mathbf{H}^1(\widehat{\Omega})} \, & \leq & \, 2 \sup_{{\Omega}} \left\{ \left| \det(J_{\F^{-1}})\right| \left\|J_{\F^{-1}}^{-1}\right\|_2^2 \left\| \tilde{J}_{\F^{-1}}\right\|^2_{2},  \left| \det(J_{\F^{-1}})\right| \left\|J_{\F^{-1}}^{-1}\right\|_2^2 \left\| \mathbb{H}_{\F^{-1}}\right\|^2_{F} \right\} \left\|{\vel{v}}_h\right\|^2_{\mathbf{H}^1({\Omega)}},\\
\left\| \hat{\vel{v}}_h\right\|^2_{L^2(\widehat{\Omega})} \, & \leq & \, \sup_{{\Omega}} \left\{ \left|\det(J_{\F^{-1}}) \right| \left\|\tilde{J}_{\F^{-1}}\right\|^2_{2} \right\} \left\|{\vel{v}}_h \right\|^2_{L^2({\Omega})} ,\\
\left\| \hat{\vel{v}}_h\right\|^2_{L^2(\partial\widehat{\Omega})} \, &  \leq & \, \left\|\text{cof}(\nabla\F^{-1})\right\|_{L^{\infty}({\Omega}),l}\sup_{\partial{\Omega}} \left\{ \left\|\tilde{J}_{\F^{-1}}\right\|^2_2 \right\} \left\|{\vel{v}}_h \right\|^2_{L^2(\partial{\Omega})}.
\end{IEEEeqnarray*}
This concludes the analysis of the RT case.

\end{proof}
\B

We next analyse $\precQ^{-1} Q$.

\begin{thm}
\label{prop:2}
It holds 
\begin{equation}
\theta \geq \lambda_{\min} \left( \precQ ^{-1} Q \right), \qquad \lambda_{\max} \left( \precQ ^{-1} Q \right) \leq \Theta,
\label{eq:specQ}
\end{equation}
 where $\theta$ and $\Theta$ are positive constants that do not depend on $h$ or on $p$.
\end{thm}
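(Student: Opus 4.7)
The plan is to use the Courant--Fischer characterization, exactly as in the proof of Theorem~\ref{prop:1}, but the argument here is considerably simpler because both $Q$ and $\precQ$ are pressure mass-type matrices built on the \emph{same} spline basis $\{\Bmull{\vect{p}}{\vect{\alpha}, i}\}$ on $\widehat{\Omega}$. Given a coefficient vector $\vect{q} \in \mathbb{R}^{n_Q}$, let $\hat{q}_h = \sum_i q_i \Bmull{\vect{p}}{\vect{\alpha}, i} \in \hat{Q}_h$. Then, by the definitions \eqref{eq:weighted-pressure-mass} and \eqref{eq:precQ},
\begin{equation*}
\vect{q}^T Q \vect{q} \;=\; \int_{\widehat{\Omega}} \nu^{-1}(\F(\vect{\eta}))\,\hat{q}_h(\vect{\eta})^2\, g(\vect{\eta})\,\d\vect{\eta}, \qquad
\vect{q}^T \precQ \vect{q} \;=\; \int_{\widehat{\Omega}} \hat{q}_h(\vect{\eta})^2\,\d\vect{\eta},
\end{equation*}
where $g = g^{TH} = |\det(J_{\F})|$ in the Taylor--Hood case and $g = g^{RT} = |\det(J_{\F})|^{-1}$ in the Raviart--Thomas case.

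The main step is to bound the weight $\omega(\vect{\eta}) := \nu^{-1}(\F(\vect{\eta}))\, g(\vect{\eta})$ uniformly from above and below by positive constants. Since $\F$ is a nonsingular spline map on the compact cube $\widehat{\Omega}$, the Jacobian determinant $\det(J_\F)$ is continuous and nowhere vanishing, hence bounded between positive constants; so the same holds for both $g^{TH}$ and $g^{RT}$. Combined with the standing assumption $\nu \in L^\infty(\Omega)$ with $0 < \nu_{\min} \le \nu \le \nu_{\max} < \infty$, this yields the existence of constants $0 < \omega_{\min} \le \omega_{\max} < \infty$, depending only on $\F$ and $\nu$ (and not on $h$ or $p$), such that $\omega_{\min} \le \omega(\vect{\eta}) \le \omega_{\max}$ on $\widehat{\Omega}$.

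Plugging this pointwise bound into the integral representation of the Rayleigh quotient gives
\begin{equation*}
\omega_{\min} \;\le\; \frac{\vect{q}^T Q \vect{q}}{\vect{q}^T \precQ \vect{q}} \;\le\; \omega_{\max} \qquad \forall\,\vect{q}\neq 0,
\end{equation*}
and the Courant--Fischer theorem then delivers \eqref{eq:specQ} with $\theta := \omega_{\min}$ and $\Theta := \omega_{\max}$ (reading the stated first inequality as $\theta \le \lambda_{\min}(\precQ^{-1}Q)$). The argument is identical for both discretizations; the only difference is the explicit form of $g$. There is no real obstacle here beyond checking positivity and boundedness of $\omega$: crucially, the $h$- and $p$-independence is automatic because $\omega$ is a fixed function of $\vect{\eta}$, independent of the discretization parameters, so the estimates pass through the Rayleigh quotient without any constants depending on the spline space.
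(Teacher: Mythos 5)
Your proof is correct and follows essentially the same route as the paper's: Courant--Fischer reduces the claim to bounding the Rayleigh quotient, which is written as a ratio of weighted $L^2$ integrals of $\hat q_h^2$ over $\widehat{\Omega}$ and bounded by taking the sup and inf of the weight $\nu^{-1} g$ (the paper writes out only the TH upper bound and notes the rest is analogous). You correctly interpret the first inequality in \eqref{eq:specQ} as $\theta \le \lambda_{\min}(\precQ^{-1}Q)$, consistent with the admissible choice $\theta^{TH} = \inf_{\widehat{\Omega}}(|\mathrm{det}(J_{\F})|\nu^{-1})$ given in Remark 1.
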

\B
\begin{proof}
We report the proof for TH discretization. The proof for the RT discretization can be derived in a analogous way.

By Courant-Fischer theorem, we need to prove
\begin{equation*}
\theta\leq
  \frac{\langle Q\vect{g},\vect{g}\rangle}{\langle \precQ \vect{g},\vect{g}\rangle}\leq \Theta \quad \forall \, \vect{g}\in \mathbb{R}^{n_Q}.
\end{equation*}
Let $\vect{g}\in\mathbb{R}^{n_Q}$ and $g_h =
\sum_{i=1}^{n_Q}[\vect{g}]_i\Bmull{\vect{p}}{\vect{\alpha},i}$. It
holds
\begin{equation}
  \label{eq:PQ}
  \vect{g}^T  Q^{TH} \vect{g} \      = \ \int_{\widehat{\Omega}} g_h^2   \nu^{-1}\B |\mathrm{det}(J_{\mathbf{\F}})|\;\d\widehat{\Omega}\leq 
\sup_{\widehat{\Omega}}\left(|\mathrm{det}(J_{\mathbf{\F}})|\  \nu^{-1}  \right) \int_{\widehat{\Omega}} g_h^2 \;\d\widehat{\Omega}   \leq \ \sup_{\widehat{\Omega}}\left(|\mathrm{det}(J_{\mathbf{\F}})|\   \nu^{-1}  \right) \vect{g}^T \precQ^{TH} \vect{g},
\end{equation}
and, in an analogous way, one can prove the other side of the inequality.

\end{proof}

\begin{remark}
The constants $\delta$, $\Delta$, $\theta$ and  $\Theta$ depend on the parametrization $\F$
and on the kinematic viscosity $\nu$. This dependence can be
inferred from the proof of Theorems \ref{prop:1}--\ref{prop:2}. Considering for
example the  TH case,  from \eqref{eq:TH-CF-equivalence}--\eqref{eq:TH_bound2} and using 
$$
\left[ \sup_{  \hat \Omega} \left\{ \frac{ \left\|J_{\F}\right\|^2_2 }{ \left|\det\left(J_{\F}\right)\right| } \right\}  \right]^{-1} = \inf_{  \widehat{\Omega}} \left\{ \frac{\left|\det\left(J_{\F}\right)\right|}{\left\|J_{\F}\right\|^2_2} \right\},
$$
we get to the following admissible choices 
$$
\delta^{TH}\!\!  = {C_{\mathrm{Korn}} \nu_{\min}} \inf_{\widehat{\Omega}} 
\left\{
  \frac{\left|\det\left(J_{\F}\right)\right|}{\left\|J_{\F}\right\|^2_2}
\right\}  \text{ and }
\Delta^{TH}\!\!  =  {\hat C ^{-1}_{\mathrm{Korn}}} \nu_{\max} \sup_{\widehat{\Omega}}
\left\{ \left|\det\left(J_{\F}\right)\right| \left\|J_{\F}\right\|^2_2 \right\}.
$$

In a similar way, from \eqref{eq:PQ}, we have following admissible choices 
\begin{IEEEeqnarray*}{c'c}
   \theta^{TH}= \inf_{\widehat{\Omega}}(|\mathrm{det}(J_{\mathbf{\F}})| 
   \nu^{-1} ),\  \text{ and } \ \Theta^{TH}=  \sup_{\widehat{\Omega}}(|\mathrm{det}(J_{\mathbf{\F}})| \nu^{-1} ).
\end{IEEEeqnarray*}
\end{remark}


\subsection{Preconditioners application: FD method}\label{sec:prec_app}
At each iteration of our iterative solver we have to solve
\begin{equation}
\mathcal{P} s = r,
\label{eq:prec_appl}
\end{equation}
where $r$ is the current residual and $\mathcal{P}$ is a 
{ preconditioner, that can be either matrix from \eqref{eq:P_D}, \eqref{eq:P_T} and \eqref{eq:P_C}.}
Besides multiplications by $B$ or $B^T$, to accomplish this task we need to solve the linear systems with matrices $\precV $ and $\precQ$.
Thanks to \eqref{eq:kron_vec_sol} and the band structure of the univariate factors in \eqref{eq:precQ}, the solution of a linear system with matrix $\precQ$ is obtained in a direct way with only $O(pn_Q)$ FLOPs.

On the other hand, the solution of a linear system with matrix $\precV$ requires  to solve three Sylvester-like equations, one for each diagonal block $\precVc{k}$.
Following  \cite{Sangalli2016}, to accomplish this aim  we use the Fast Diagonalization (FD) direct method of  \cite{Lynch1964} 
and \cite{Deville2002}.  
 We now briefly explain its main features.

Consider the general  Sylvester-like system:
\begin{equation}
{R}q := (K_3 \otimes M_2 \otimes M_1 + M_3 \otimes K_2 \otimes M_1 +  M_3 \otimes M_2 \otimes K_1)\  q = t,
\label{eq:sylv_eq}
\end{equation}
with both $M_i$ and $K_i$ symmetric and positive definite matrices for $i=1,2,3$.
Let 
\begin{equation}
K_i U_i = M_iU_i D_i, \quad i=1,2,3,
\label{eq:eig_dec}
\end{equation}
be the eigendecomposition of the pencils $(K_i, M_i)$,
where $D_i$ are diagonal matrices containing the eigenvalues of $M_i^{-1}K_i$ and $U_i^{T}M_iU_i = I$.
 We have $M_i = U_i^{-T}U_i^{-1}$ and $K_i = U_i^{-T}D_i U_i ^{-1}$. 
 Then, we can factorize ${R}$ as
$$
{R}=(U_3\otimes U_2 \otimes U_1)^{-T}(I\otimes I\otimes D_1 + I\otimes D_2 \otimes I + D_3 \otimes I\otimes I)(U_3\otimes U_2 \otimes U_1)^{-1}\!.
$$
Exploiting \eqref{eq:krontranspose},  \eqref{eq:kronvec} and the factorization above, the solution of \eqref{eq:sylv_eq} can be computed by the following algorithm.

\begin{algorithm}[H]
\caption{ 3D FD method }\label{al:3Ddirect_P}
\begin{algorithmic}[1]
\State Compute the generalized eigendecompositions \eqref{eq:eig_dec}
\State Compute $\tilde{t} = (\U1 \otimes \U2 \otimes \U3)^T t $
\State Compute $\tilde{q} = \left(I\otimes I\otimes D_1 + I\otimes D_2 \otimes I +  D_3 \otimes I\otimes I\right)^{-1} \tilde{t} $
\State Compute $q = (\U1 \otimes \U2 \otimes \U3)\ \tilde{q} $
\end{algorithmic}
\end{algorithm}
Assuming for simplicity that the matrices $K_i$ and $M_i$ all have the same order $n$,
Algorithm \ref{al:3Ddirect_P} requires $12n^4+O(n^3)= 12 n_{dof}^{4/3}+O(n_{dof})$ FLOPs, where $n_{dof} = n^3$ denotes the order of $R$.
Step 1 and step 3 are optimal as they require only $O(n_{dof})$ FLOPs.
The asymptotic dominant cost, i.e. $ 12 n_{dof}^{4/3}$ FLOPs, is related to the  matrix-matrix products of step 2 and step 4, while step 1 and step 3 are optimal as they require only $O(n_{dof})$ FLOPs. 
{\GS
However step 2 and step 4, being BLAS level 3 operations, are typically implemented in a highly efficient way on modern computers. 
As a consequence, despite their superlinear computational cost, in practice they do not dominate the computational time of the overall iterative strategy {(see the numerical experiments of \cite{Sangalli2016} and the ones in the present paper for more details on this important point)}. }

 \section{Preconditioners for $\precV$ and $\precQ$   
  including coefficients information}
\label{sec:inc_geo}
 The proposed preconditioners $\precV$ and $\precQ$  from Section  
 \ref{sec:block-prec-simple-case}  are robust with respect to the
 mesh size and spline degree. However they do not incorporate any
 information from the coefficients (either the  geometry map  $\F$
 and or the kinematic viscosity $\nu$)  and in fact  the preconditioner's quality  is
 affected from the coefficients. This is reflected in the
 analysis of Section \ref{sec:spec_prop} (see Remark 1 for the TH
 case).  Numerical
 tests of Section \ref{sec:num_res} confirm this expectation. 
We therefore  present two  strategies that partially incorporate
$\nu$ and $\F$ in $\precV$ and $\precQ$, without increasing  the
preconditioners  computational
cost. \GS

First, we consider  a diagonal scaling. In particular, 
we replace $\precQ$  by $\precQg:= D_Q^{1/2} \precQ D_Q^{1/2}$,  where
$D_Q$
is a  diagonal matrices having diagonal entries  $\left[D_Q\right]_{i,i} =
\left[Q\right]_{i,i}/\left[\precQ\right]_{i,i} $. Even though we postpone a mathematical analysis
of it to a further work, the numerical tests in  Section \ref{sec:num_res} show that this
cheap modification of the preconditioner 
is sufficient to give $\precQg$ robustness with respect to the coefficients (not only $\F$, as
indicated, but also $\nu$). 

The same idea,  applied to $\precV$, while  able to incorporate
efficiently the contribution  of the scalar coefficient $\nu$,   is less effective when the
geometry parametrization  is far from a scaled identity. In this case
we propose to incorporate  some components of the geometry
parametrization into the univariate matrix factors appearing in
\eqref{eq:prec3D_TH} and \eqref{eq:prec3D_RT} (see the Appendix for
details) in order to build a preconditioner  $\precVgg $ such that
Algorithm \ref{al:3Ddirect_P} can still be used. Then, we  apply a
diagonal scaling. This leads to an effective preconditioner having the
form $
\precVg:=D_V^{1/2} \precVgg D_V^{1/2} $, where $D_V$ has diagonal
entries $\left[D_V\right]_{i,i} = \left[A\right]_{i,i}/\left[\precVgg\right]_{i,i}$.

We use the following notation:    $\geo{D} $, $\geo{T} $ and
$\geo{C}$  are the preconditioner matrices for the Stokes system  obtained by replacing
$\precV$ and $\precQ$ with $\precVg$ and $\precQg$  in
\eqref{eq:P_D}, \eqref{eq:P_T}
and  \eqref{eq:P_C},
respectively. The corresponding { preconditioned} strategies are then
referred to as  $\geo{D} $-MINRES, $\geo{T} $-GMRES and $\geo{C}$-GMRES.


\B

\section{Numerical results}\label{sec:num_res}
We  present here numerical experiments to show the performance of our preconditioning strategies.
All the tests are performed by  Matlab (version 8.5.0.197613 R2015a) and using the GeoPDEs toolbox \cite{Vazquez2016},  on a  Intel Xeon i7-5820K processor, running at 3.30 GHz, and with 64 GB of RAM. 
We restrict our tests to a single computational thread. 
Indeed, even though our strategy would likely benefit from parallelization on a multicore hardware, as its main computational efforts are matrix products, a careful analysis of the parallel implementation would
 require an in-depth study, which is beyond the scope of this work. 

In the construction and application  of our preconditioner  the two dominant
steps are the eigendecomposition of the univariate matrices (step 1 in
Algorithm~\ref{al:3Ddirect_P}) and the multiplication of Kronecker
matrices (steps 2 and 4  in Algorithm~\ref{al:3Ddirect_P}). These two
key operations are  performed by the \texttt{eig} Matlab function and
by the Tensorlab toolbox \cite{Sorber2014}, respectively. The partial
inclusion of the geometry has a negligible cost (see the Appendix). The tolerance of both MINRES and GMRES is set to $10^{-8}$
and the initial guess is the null vector in all tests. 

As a comparison, we consider a block-diagonal preconditioner
based on an incomplete Cholesky factorization.
In our case,  the zero-fill  incomplete Cholesky factorization,
denoted IC(0),  is computed  by the 
MATLAB \texttt{ichol} routine for the matrix 
$$
\begin{bmatrix}
A_{11} &  0 & 0& 0 \\
0 &  A_{22} & 0 & 0\\
0&  0 & A_{33}&   0\\
0 & 0 & 0 & Q
\end{bmatrix}
$$
and then used in a  Conjugate Gradient (CG) inner iteration   in order to
approximate  the application of the  ideal preconditioner 
\begin{equation}
  \label{eq:ideal-prec}
  \begin{bmatrix}
A_{11} &  A_{12} & A_{13}& 0\\
A_{21} &  A_{22} & A_{23}& 0\\
A_{31} &  A_{32} & A_{33}& 0\\
0 & 0 & 0 & Q
\end{bmatrix}.
\end{equation}
This strategy is  denoted IC(0)-MINRES. 
The  tolerance of this inner CG  loop is set to $10^{-2}$ as this maximizes
the efficiency of the overall strategy 
in the numerical tests we
consider below. The inner  loop is needed to achieve
robustness with respect to $h$,   while robustness with respect to $p$
is common for incomplete factorizations. For this reason, incomplete
factorizations are  often adopted in IGA as  preconditioners: in the
context of the Stokes system, see \cite{Cortes2015} where a similar
approach is considered and benchmarked.

We remark that the geometry parametrization, without simplifications, is directly incorporated in the preconditioner \eqref{eq:ideal-prec}.
 Therefore, as it is seen  in the
tests below, IC(0)-MINRES behaves quite robustly with 
respect to the geometry  parametrizations (since 
$ \lambda_{\max}\left(Q^{-1} BA^{-1}B^T \right)$ and $\lambda_{\min}\left(Q^{-1} BA^{-1}B^T\right) $
depend on $\Omega$, some
dependence on the shape of the domain is unavoidable), while  the
geometry parametrization has a  critical role in our
strategies. Also for this reason,  IC(0)-MINRES is an important term of
comparison.

We consider three different geometries, with increasing complexity
(from the point of view of the geometry parametrization): the
cube, the { eighth} of annulus, and a hollow torus with an eccentric annular cross-section \B (see 
Figure \ref{fig:Geometries}).

{
As discussed in Section \ref{sec:Stokes}, the Stokes problem is discretized using the spaces $V_{h,0}^{TH}$, $Q_{h,0}^{TH}$, $V_{h,0}^{RT}$ and $Q_{h,0}^{RT}$ defined respectively in \eqref{eq:TH_V_basis}, \eqref{eq:TH_Q0}, \eqref{eq:RT_V_basis} and \eqref{eq:RT_Q0}.
In all our tests we choose a uniform regularity $\vect{\alpha} = (\alpha,\alpha,\alpha)$ with $\alpha = p-1$,
except for the hollow torus domain where the spaces are  $C^0$ at
the boundary of the initial mesh elements, and $C^\alpha$,   $\alpha =
p-1$, once the mesh is refined. Note that $p$ always refers to the spline degree of the pressure space.
For Raviart-Thomas discretizations we choose $C_{pen} = 5(\alpha + 1)$ in \eqref{eq:theta}, as it numerically leads to stable schemes (see \cite{Evans2013}). }

Tables \ref{tab:cube_TH}--\ref{tab:Torus_GMRES}  report 
the total solving time, which includes the 
preconditioner setup and the MINRES/ GMRES iterations.  
However, we exclude the time for the formation
of the pressure mass matrix $Q$, which is needed in IC(0) 
and  $\geo{D}$, $\geo{T}$, $\geo{C}$  setup (though only the main
diagonal of  $Q$ is needed in our approaches,  and, in all cases,  only a low-order
approximation of $Q$ is needed for preconditioning). Indeed, it is well known that
the formation of isogeometric matrices is expensive unless ad-hoc routines are adopted (e.g. the weighted-quadrature approach
\cite{Calabro2017} or the low-rank approach \cite{Mantzaflaris2017}). In this
paper, we only  focus on the solver and do not address the efficient
formation of the matrix.  We denote by  $n_{el}$ the number of
elements in each parametric direction.   The symbol  ``$\ast$'' denotes the
impossibility of formation of the matrix $\mathcal{A}$,  due to memory requirements.

In Table \ref{tab:thick_annulus_TIME} we report,
only for the  eighth of annulus  testcase, the preconditioner
setup time and the  preconditioner application time, separately, and  in
Table  \ref{tab:appl_matrix} we report the  percentage   of computing time spent in the preconditioner application. 
  Finally, Table \ref{tab:nu_var} contains number of iterations and solving times obtained with three different choices of variable kinematic viscosity $\nu$ in the hollow torus domain. 
\begin{figure}
 \centering
 \subfloat[][Cube.\label{fig:cube}]
   {\includegraphics[width=.44\textwidth]{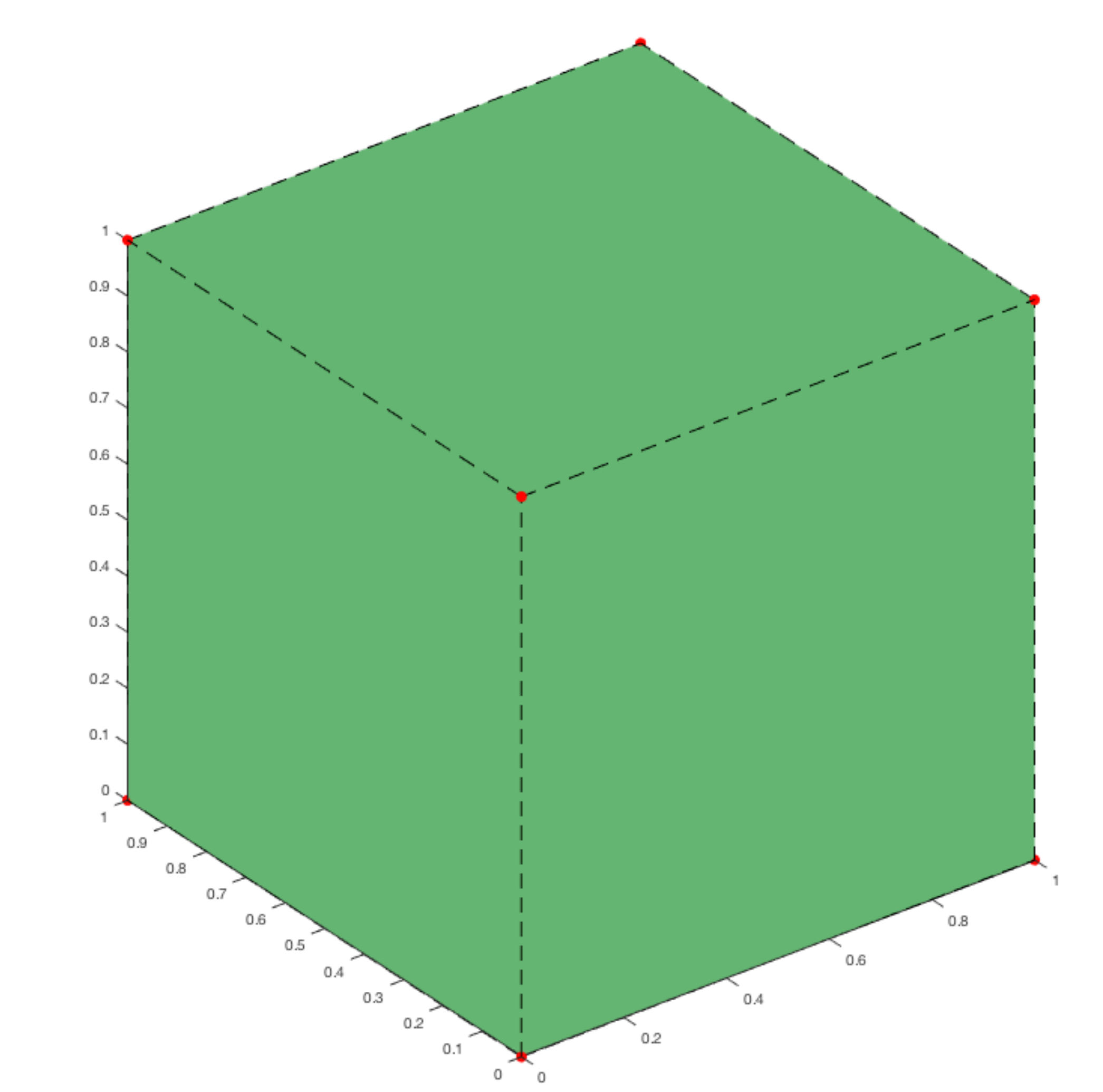}}\quad
 \subfloat[][Eighth of thick annulus.\label{fig:thick_ann}]
   {\includegraphics[width=.44\textwidth]{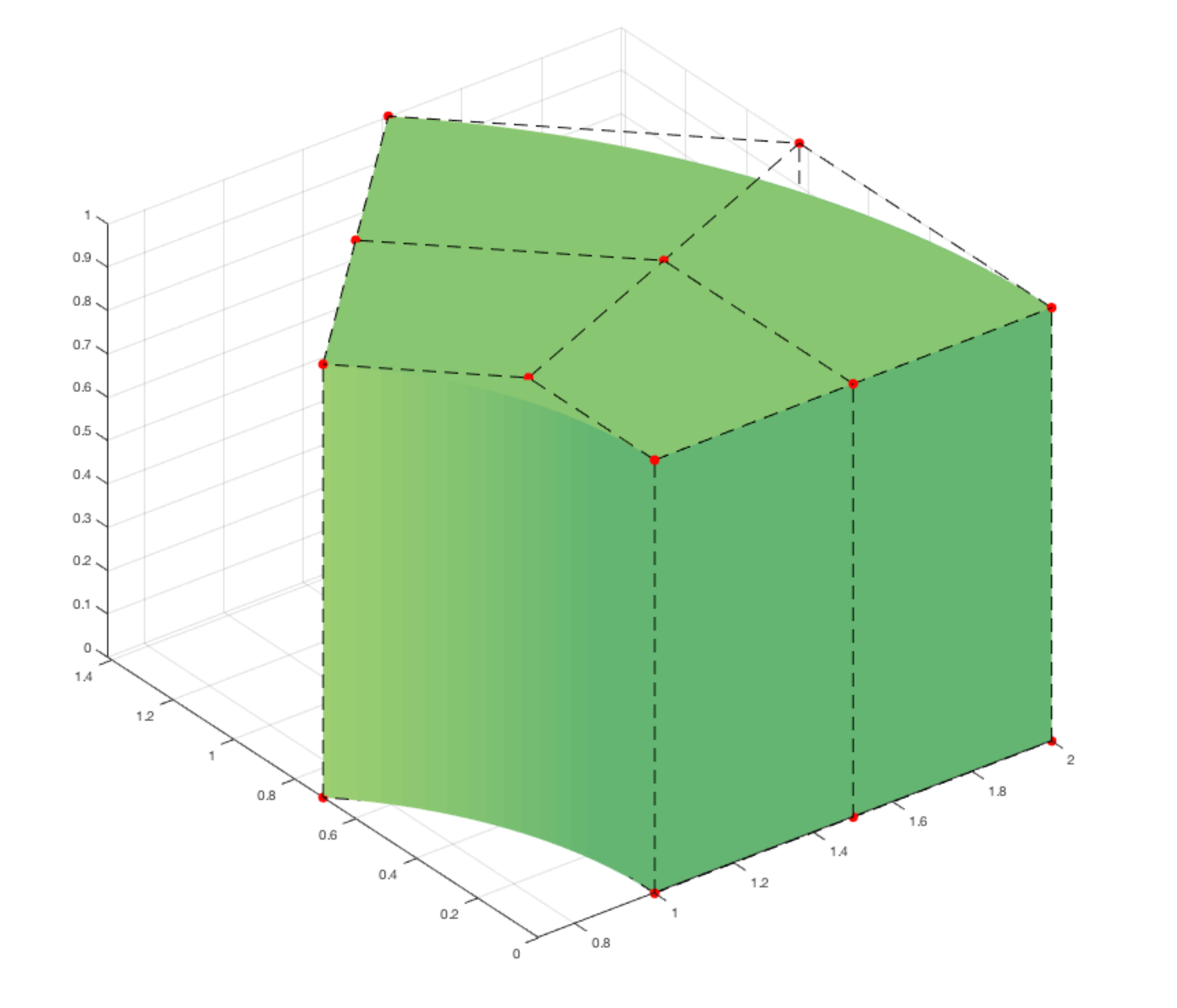}}\quad
 \subfloat[][Hollow torus.\label{fig:Torus}]
   {\includegraphics[width=.44\textwidth]{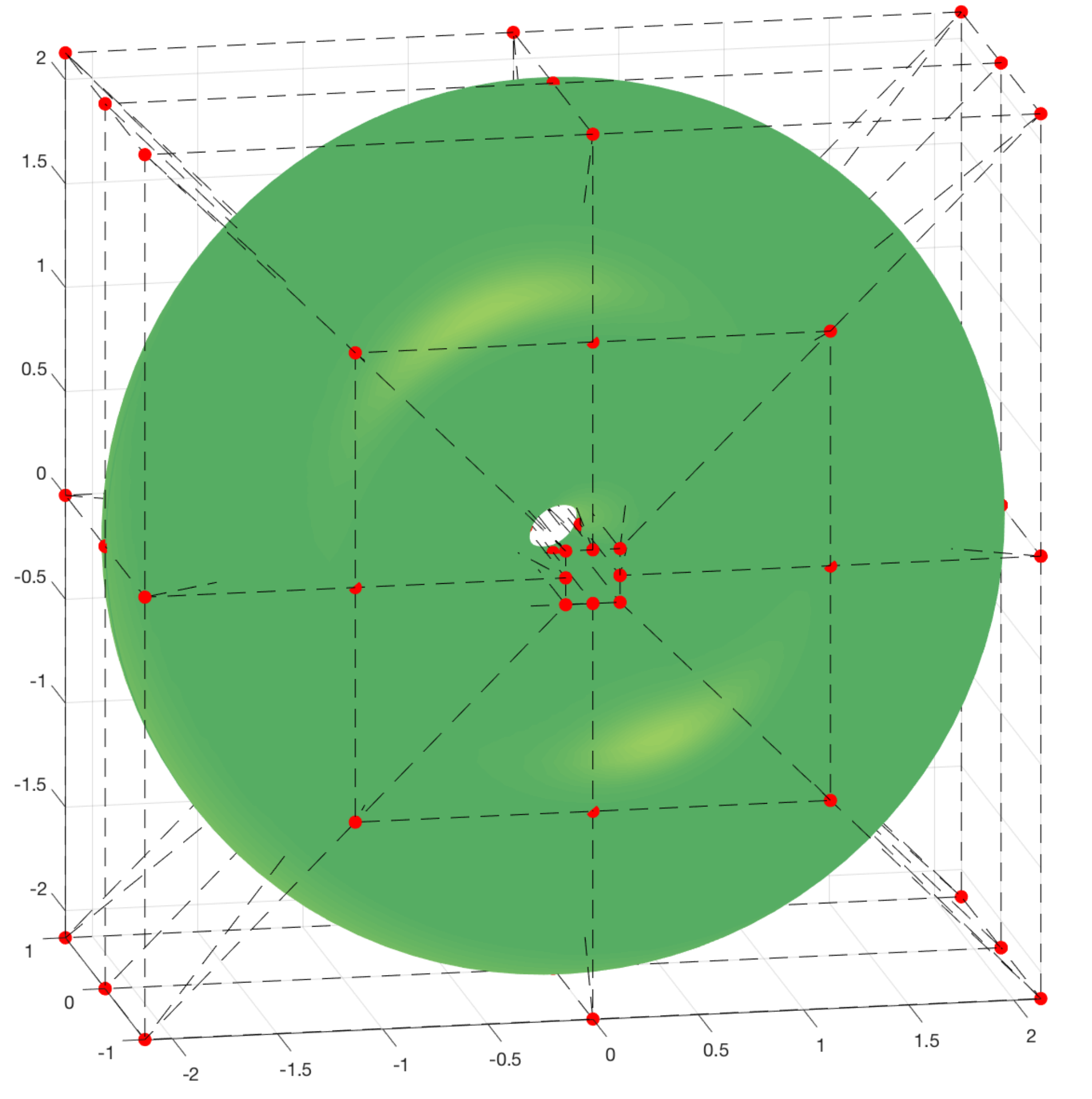}}\quad
 \subfloat[][Hollow torus (cross section).\label{fig:Torus_section}]
   {\includegraphics[width=.44\textwidth]{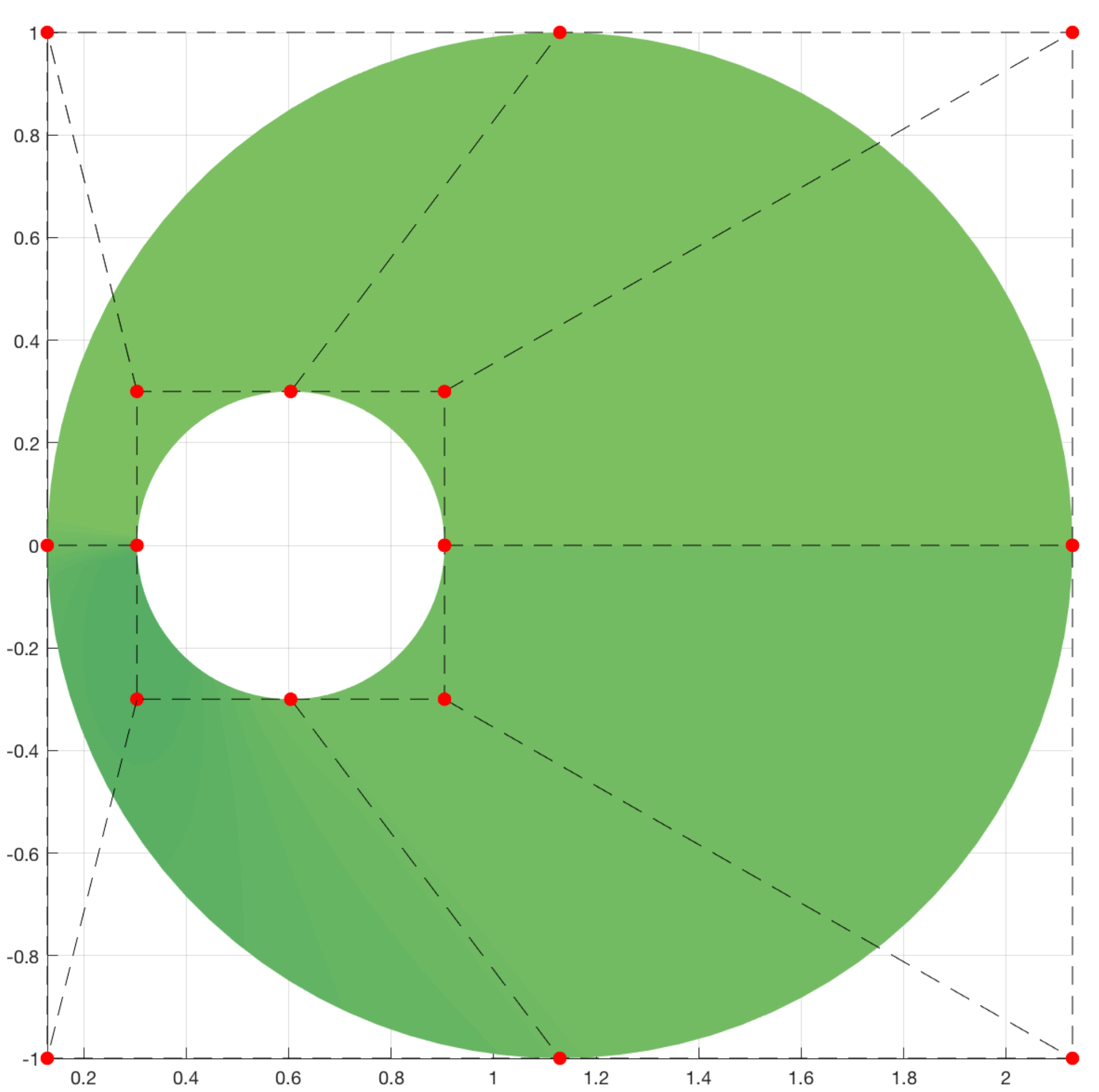}}
 \caption{Computational domains.  }
 \label{fig:Geometries}
\end{figure}

\paragraph{Cube}
We first consider the symmetric driven cavity problem in $\Omega =\widehat{\Omega}=[0,1]^3$ (Figure \ref{fig:cube}). 
In this case, $\F$ is the identity  map and therefore $A_{kk}=\precVc{k}$. 
Homogeneous boundary conditions for the velocity on the lateral sides of the cube and a velocity equal to $[1, 0, 0]^T$ at the top and  to $[-1, 0, 0]^T$ at the bottom are imposed, while $f$ is the null function and $\nu=1$. 

In Table \ref{tab:cube_TH} we report, for the TH discretization,
$\mathcal{P}_{D}$-MINRES and  IC(0)-MINRES performances.
The former is much faster, especially for high degree.
$\mathcal{P}_{D}$-MINRES  results with RT discretization are reported in Table \ref{tab:cube_RT}.
The computational time is lower compared to TH discretization since, for
equal { mesh sizes},   the TH velocity space is  about $2^3$ times
bigger than the one for RT. 
In all cases the number of iterations is uniformly bounded with
respect to  $p$ and $n_{el}$.
  
{\renewcommand\arraystretch{1.2} 
\begin{table}
\begin{center}
\footnotesize \GS
\begin{tabular}{|r|c|c|c|c|}
\hline 
& \multicolumn{4}{|c|}{(TH)\quad $\mathcal{P}_{D}$-MINRES  \quad Iterations / Time (sec)} \\
\hline
$n_{el}$ & $p=2$ & $p=3$ & $p=4$ & $p=5$  \\
\hline
4 & 48  / \z0.16   & 51 / \z\z0.21   & 52 /  \z0.43   & 52 / \z0.81 \\
\hline
8 & 53 / \z0.74   & 53 / \z\z1.49  & 53 /  \z3.01  & 53 / \z5.70    \\
\hline
16 & 56 / \z5.61   & 56 / \z12.76   & 56 /  26.54   & 56 / 51.00   \\
\hline
32 & 56 / 52.23    & 56 / 114.07  & $\ast$   & $\ast$   \\
\hline
\end{tabular}  

\vspace{0.5cm}

\begin{tabular}{|r|c|c|c|c|}
\hline
& \multicolumn{4}{|c|}{(TH)\quad  IC(0)-MINRES  \quad Iterations / Time (sec)} \\
\hline
$n_{el}$ & $p=2$ & $p=3$ & $p=4$ & $p=5$  \\
\hline
4 & 35  / \z\z0.22   & 37 / \z\z0.69  & 37 /  \z\z1.71   & 37 / \z\z  3.77 \\
\hline
8 & 34 / \z\z2.82 & 37 / \z\z7.22   & 35 /  \z16.10   & 36 / \z33.76    \\
\hline
16 & 35 / \z35.09   & 35 / \z74.34   & 35 /  151.87   & 35 / 305.90   \\
\hline
32 & 36 / 482.25   & 36 / 902.51   & $\ast$  & $\ast$   \\
\hline
\end{tabular} 
\caption{Cube domain (TH). Performance of $\mathcal{P}_{D}$-MINRES (upper table) and  IC(0)-MINRES (lower table). }
\label{tab:cube_TH}
\end{center}\B
\end{table}}

{\renewcommand\arraystretch{1.2} 
\begin{table} \GS
\begin{center}
\footnotesize
\begin{tabular}{|r|c|c|c|c|}
\hline
& \multicolumn{4}{|c|}{(RT) \quad $\mathcal{P}_{D}$-MINRES \quad Iterations / Time (sec)} \\
\hline
$n_{el}$ & $p=2$ & $p=3$ & $p=4$ & $p=5$  \\
\hline
4 & 43  / 0.13   & 46 / \z0.18  & 48 / \z0.23  & 48 / 0.39    \\
\hline
8 & 54 / 0.23   & 52 / \z0.44  & 52 / \z0.85 & 52 / 1.59  \\
\hline
16 & 55 / 0.95    & 53 / \z2.56   & 52 / \z4.77   & 52 / 9.02     \\
\hline
32 & 55 / 6.39  & 54 / 16.67   & 52 / 34.58  & $\ast$   \\
\hline
\end{tabular}  
\caption{Cube domain (RT). Performance of $\mathcal{P}_{D}$-MINRES. }
\label{tab:cube_RT}
\end{center}\B
\end{table}}

\paragraph{{ Eighth} of  thick annulus}
Now we consider the { eighth} of a thick  annulus domain (Figure \ref{fig:thick_ann}). 
The internal radius and the height are equal to 1, while  the external radius is equal to 2. 
The boundary data represent a generalization of the symmetric driven cavity boundary conditions, i.e. the velocity is constrained to be  $[-1,   0,   0]^T$ on the set  $\lbrace y = 0 \rbrace$ and $[\sqrt{2} /  2, \sqrt{2}/  2,   0]^T$  on the opposite side, while  homogeneous boundary conditions are imposed anywhere else.
Note that in this case $A_{kk}\neq \precVc{k}$.
 The kinematic viscosity $\nu$ is  constant and equal to 1.

Table  \ref{tab:thick_TH} shows the results of $\mathcal{P}_{D}$-MINRES, $\geo{D}$-MINRES and IC(0)-MINRES  for TH discretization.
Again,   IC(0)-MINRES  is not competitive with
$\mathcal{P}_{D}$-MINRES and $\geo{D}$-MINRES in terms of computing time.
The use of $\geo{D}$-MINRES halves the number of iterations and the solving time  w.r.t.  $\mathcal{P}_{D}$-MINRES, indicating that the inclusion of some geometry information improves the performance of the preconditioner. 
In Table \eqref{tab:thick_annulus_RT} we report results for $\mathcal{P}_{D}^{\F}$-MINRES with RT discretization.
The performances of $\geo{T}$-GMRES and $\geo{C}$-GMRES with TH and RT discretizations are reported in Table \ref{tab:thick_GMRES} and Table \ref{tab:thick_GMRES_RT} respectively.
{\GS
We do not report results for $\mathcal{P}_{T}$-GMRES and $\mathcal{P}_{C}$-GMRES, as the effect of not including any geometry in the preconditioners is similar to the case of the block diagonal preconditioner.
We see that, though the number of iterations of both $\geo{T}$-GMRES and $\geo{C}$-GMRES is lower than $\geo{D}$-MINRES, they are comparable to it in terms of CPU time. This is due the higher application cost of the block triangular and constraint preconditioners 
(which is mainly related to the matrix-vector products with $B$ and $B^T$).
We emphasize that, again, in all the FD-based strategies the number of iterations is uniformly bounded with respect to  $p$ and $n_{el}$.
}

In order to better understand the behaviour of the preconditioners,
and identify directions of further improvements, we analyse in  Table
\ref{tab:thick_annulus_TIME} the computational costs for the setup
and the application of the preconditioners. We recall that for
IC(0)-MINRES, the application corresponds to the execution of the
inner CG iterative solver with residual tolerance  $10^{-2}$. In all
cases, we assume the pressure mass matrix $Q$ is given.  
{\GS 
Table \ref{tab:thick_annulus_TIME} reports the total time spent in the
preconditioner setup and application. We clearly see that the FD-based
preconditioners are much faster than the incomplete
factorization. Note that the setup time for $\mathcal{P}_{D}^{\F}$ is
higher than for $\mathcal{P}_{D}$ due to the cost of computing the
separable approximation of the geometry (see the Appendix): further  studies and tune up of this procedure will be considered in
our following works.

In Table \ref{tab:appl_matrix}, preconditioner application time is compared with the overall computation time of the iterative solver. With $\mathcal{P}_{D}^{\F}$-MINRES strategy, the percentage of time spent for the preconditioner is negligible,  e.g. when $p=5$ and $n_{el}=16$  it is less than  $1\%$. The computation time is indeed mainly spent in the matrix-vector multiplication. This situation suggests that further improvements could be obtained shifting towards a matrix-free  implementation \cite{Sangalli2017}.

The results of Table \ref{tab:thick_annulus_TIME} and
\ref{tab:appl_matrix} clearly show that the suboptimal asymptotic cost
$O(n_{dof}^{4/3})$ of the preconditioner is not seen in practice, up
to the largest problem tested. Note in particular from 
 Table \ref{tab:thick_annulus_TIME}  that the application times of the FD-based
preconditioners scale with respect to $h$ much better than the
asymptotic cost would suggest. This is due to the high efficiency of
the routines that computes the dense matrix-matrix products that are
the core of the FD method.}

{\renewcommand\arraystretch{1.2} 
\begin{table}
\begin{center}
\footnotesize

\GS

\begin{tabular}{|r|c|c|c|c|}
\hline
& \multicolumn{4}{|c|}{(TH) \quad $\mathcal{P}_{D}$-MINRES  \quad Iterations / Time (sec)} \\
\hline
$n_{el}$ & $p=2$ & $p=3$ & $p=4$ & $p=5$  \\
\hline
4 &  116 / \z\z0.39    &   128 / \z\z0.56  &  137  /  \z1.12    &   146 / \z\z2.14  \\
\hline
8 &  146  / \z\z1.66   &  153  / \z\z4.02    &  158 /  \z8.79    &    160 / \z16.83       \\
\hline
16 &  163 / \z16.53    &  164 / \z38.54    &  165  /  75.95    &   162 /   138.17 \\
\hline
32 &   169  /  181.68     &  166 / 337.37    &  $\ast$     & $\ast$   \\
\hline
\end{tabular}  

\vspace{0.5cm}

\begin{tabular}{|r|c|c|c|c|}
\hline
& \multicolumn{4}{|c|}{(TH) \quad $\geo{D}$-MINRES  \quad Iterations / Time (sec)} \\
\hline
$n_{el}$ & $p=2$ & $p=3$ & $p=4$ & $p=5$  \\
\hline
4 & 65  / \z0.21  & 68 / \z\z0.33   & 69 /  \z0.57   & 72 / \z1.09 \\
\hline
8 &  72  /  \z0.91   & 74 / \z\z2.06  & 74 /  \z4.24   & 75 / \z8.01      \\
\hline
16 & 77 / \z8.11   & 77 / \z18.82   & 77 /  36.70    & 77 / 67.74  \\
\hline
32 & 79 / 90.56  & 79 / 168.60    & $\ast$    & $\ast$   \\
\hline
\end{tabular}  

\vspace{0.5cm}

\begin{tabular}{|r|c|c|c|c|}
\hline
& \multicolumn{4}{|c|}{(TH) \quad IC(0)-MINRES \quad Iterations / Time (sec)} \\
\hline
$n_{el}$ & $p=2$ & $p=3$ & $p=4$ & $p=5$  \\
\hline
4 & 39  / \z\z0.28  & 39 / \z\z\z0.79  & 41  /  \z\z1.64  & 41 / \z\z32.69   \\
\hline
8 & 39 / \z\z3.13   & 39 / \z\z\z7.44   & 39 /  \z16.47   & 39 / \z32.69    \\
\hline
16 & 40 / \z39.44   & 39 / \z\z80.53   & 37 /  157.37   & 37 / 281.24   \\
\hline
32 & 38 / 611.55   & 38 / 1085.21    & $\ast$    & $\ast$   \\
\hline
\end{tabular} 
\caption{{ Eighth} of thick annulus domain  (TH). Performance of $\mathcal{P}_{D}$-MINRES (upper table), $\geo{D}$-MINRES (middle table) and  IC(0)-MINRES  (lower table). }
\label{tab:thick_TH}
\end{center}
\end{table}

\B}

{\renewcommand\arraystretch{1.2} 
\begin{table}
\begin{center}
\footnotesize
\GS

\begin{tabular}{|r|c|c|c|c|}
\hline
& \multicolumn{4}{|c|}{(RT)\quad $\geo{D}$-MINRES \quad Iterations / Time (sec)} \\
\hline
$n_{el}$ & $p=2$ & $p=3$ & $p=4$ & $p=5$  \\
\hline
4 & 59  /  0.22   & 58  / \z0.17    & 62  / \z0.30    &   63 / \z0.54   \\
\hline
8 & 63   /  0.29  &  63  / \z0.58    &  61 / \z1.09   &  64 / \z2.10   \\
\hline
16 &  67  /  1.36    & 65  / \z3.23    & 65  / \z6.37    &  66 /  12.07   \\
\hline
32 &  65 / 8.71   &   66 / 23.73    &  66 /  48.38    & $\ast$   \\
\hline
\end{tabular}  
\caption{{ Eighth} of thick annulus domain (RT). Performance of $\geo{D}$-MINRES. }
\label{tab:thick_annulus_RT}
\end{center}
\end{table}\B}

{\renewcommand\arraystretch{1.2} 
\begin{table}\GS
\begin{center}
\footnotesize
\begin{tabular}{|r|c|c|c|c|}
\hline
& \multicolumn{4}{|c|}{(TH)\quad $\geo{T}$-GMRES  \quad Iterations / Time (sec)} \\
\hline
$n_{el}$ & $p=2$ & $p=3$ & $p=4$ & $p=5$  \\
\hline
4 &  38  / \z0.20   & 42 / \z\z0.28  & 42  /  \z0.56   &  47 / \z1.17    \\
\hline
8 &  41  / \z0.78   & 42 / \z\z1.78   & 43 /  \z4.50   & 45 / \z8.50     \\
\hline
16 & 43 / \z7.57    & 44 / \z17.52  & 45 /  35.43  &  46 / 66.21  \\
\hline
32 & 45 / 76.69    &  46 / 165.72    & $\ast$     & $\ast$   \\
\hline
\end{tabular}  

\vspace{0.5cm}

\begin{tabular}{|r|c|c|c|c|}
\hline
& \multicolumn{4}{|c|}{(TH)\quad $\geo{C}$-GMRES  \quad Iterations / Time (sec)} \\
\hline
$n_{el}$ & $p=2$ & $p=3$ & $p=4$ & $p=5$  \\
\hline
4 &    35 / \z0.21   &     37 / \z\z0.30  &    39 /   \z0.59 &   41 / \z1.15  \\
\hline
8 &   37 / \z0.80  &   38 / \z\z1.77    &   39 /  \z4.33    &  41 / \z8.25    \\
\hline
16 &  38 / \z7.19    &   39 / \z16.51   &   40 /  33.47   &  41 /     62.98 \\
\hline
32 &  39 / 61.29 &  40 / 152.44     & $\ast$    & $\ast$   \\
\hline
\end{tabular} 
\caption{{ Eighth} of thick annulus domain (TH). Performance of  $\geo{T}$-GMRES (upper table) and   $\geo{C}$-GMRES (lower table). }
\label{tab:thick_GMRES}
\end{center}\B
\end{table}}

{\renewcommand\arraystretch{1.2} 
\begin{table}
\GS
\begin{center}
\footnotesize
\begin{tabular}{|r|c|c|c|c|}
\hline 
& \multicolumn{4}{|c|}{(RT)\quad $\geo{T}$-GMRES \quad Iterations / Time (sec)} \\
\hline
$n_{el}$ & $p=2$ & $p=3$ & $p=4$ & $p=5$  \\
\hline
4 &   41  / \z0.19   &   44  / \z0.20    &   46  / \z\z0.35   &  48  /  \z0.69  \\
\hline
8 &    46  / \z0.34   &   47   / \z0.71    &  49 / \z\z1.48    &  50  / \z5.55   \\
\hline
16 &  47  / \z1.72  &  49 / \z7.77   &   50  / \z16.57   &   52   /   32.86   \\
\hline
32 &  48 / 21.15    &    50  / 56.50    &   52  / 120.06    & $\ast$   \\
\hline
\end{tabular}  

\vspace{0.5cm}     

\begin{tabular}{|r|c|c|c|c|}
\hline
& \multicolumn{4}{|c|}{(RT)\quad $\geo{C}$-GMRES \quad Iterations / Time (sec)} \\
\hline
$n_{el}$ & $p=2$ & $p=3$ & $p=4$ & $p=5$  \\
\hline
4 &    37  / \z0.19    &     38   / \z0.22  &   39   / \z\z0.36    &   40   / \z0.68  \\
\hline
8 &  38  / \z0.34   &   40  / \z0.71     &    41   /  \z\z1.41  &      42  / \z4.98  \\
\hline
16 &    39  / \z1.63    &  40  / \z6.81    &   41   / \z14.42      &   42   / 28.18  \\
\hline
32 &   39 / 18.30   &    40   / 48.05   &   41 / 100.99    & $\ast$   \\
\hline
\end{tabular} 
\caption{{ Eighth} of thick annulus domain  (RT). Performance of $\geo{T}$-GMRES (upper table) and   $\geo{C}$-GMRES (lower table). }
\label{tab:thick_GMRES_RT}
\end{center} 
\end{table}}

{\renewcommand\arraystretch{1.2} 

\begin{table}
\begin{center}\GS
\footnotesize
\begin{tabular}{|r|c|c|c|c|}
\hline
& \multicolumn{4}{|c|}{$\mathcal{P}_{D}$ Setup times / Total Application times ($\mathcal{P}_{D}$-MINRES)} \\
\hline
$n_{el}$ & $p=2$ & $p=3$ & $p=4$ & $p=5$   \\
\hline
4 &   0.02 / 0.19   & 0.02  / 0.20   &     0.02  / 0.20    &  0.03 /       0.21 \\
\hline
8 &  0.04  / 0.27   &  0.04 / 0.29   & 0.04  /  0.33      &  0.04   /   0.37 \\
\hline
16 &   0.05 / 0.87  &  0.06  / 0.95   &  0.06 /  1.09     &  0.06  /   1.18 \\
\hline
32 &   0.09 / 7.21      & 0.12  / 9.94   &  $\ast$  & $\ast$  
 \\
\hline
\end{tabular} 
\vspace{0.5cm}

\begin{tabular}{|r|c|c|c|c|}
\hline
& \multicolumn{4}{|c|}{$\mathcal{P}_{D}^{\F}$  Setup times /  Total Application times ($\mathcal{P}_{D}^{\F}$-MINRES)} \\
\hline
$n_{el}$ & $p=2$ & $p=3$ & $p=4$ & $p=5$  \\
\hline
4 &   0.05  / 0.88   &  0.06  /  0.10  &  0.06  /  0.10    &  0.07 / 0.11   \\
\hline
8 & 0.09  / 0.13   &  0.12  / 1.49    &    0.16 /  0.16   &  0.21 / 0.18  \\
\hline
16 &  0.28  / 0.46   &  0.49 / 0.51  & 0.76  /  0.56   & 1.14  / 0.62    \\
\hline
32 &   1.57  /  3.86    & 3.20 /  3.93 &  $\ast$  & $\ast$    \\
\hline
\end{tabular}  
\vspace{0.5cm}

\begin{tabular}{|r|c|c|c|c|}
\hline
& \multicolumn{4}{|c|}{IC(0)  Setup times /  Total Application times (IC(0)-MINRES)} \\
\hline
$n_{el}$ & $p=2$ & $p=3$ & $p=4$ & $p=5$   \\
\hline
4  &  0.01  /  \z\z0.21   &  \z0.03   / \z\z0.59   & \z\z0.12   /  \z\z1.43    &  \z0.38 / \z\z3.04   \\
\hline
8 &  0.09  / \z\z2.55  &   \z0.45  /  \z\z6.02   &  \z\z1.46  /   \z13.05   & \z4.23  /  \z23.98 \\
\hline
16 & 0.94  / \z34.49    &  \z4.36 / \z66.68     &   \z13.90  /   125.35    & 40.91    /  207.12    \\
\hline
32 &  9.09 / 558.27    &  46.65 / 889.03  &  $\ast$   & $\ast$     \\
\hline
\end{tabular} 
\vspace{0.5cm}

\caption{Eight of thick annulus domain (TH). Setup times and total application
  times of the preconditioners  $\mathcal{P}_{D}$ (top table), $\mathcal{P}_{D}^{\F}$ (middle table) and  IC(0)  (bottom table). }
\label{tab:thick_annulus_TIME}
\end{center}
\end{table}}

{\renewcommand\arraystretch{1.2} 

\begin{table}
\begin{center}\GS
\footnotesize
\begin{tabular}{|r|c|c|c|c|c|}
\hline
& \multicolumn{4}{|c|}{$\mathcal{P}_{D}^{\F}$} \\
\hline
$n_{el}$ & $p=2$ & $p=3$ & $p=4$ & $p=5$   \\
\hline

8 &  14.28\%  &   \z6.79\%  &  \z3.77\%  &  2.24\%   \\
\hline
16 &   \z5.67\%  &   \z2.70\%   &  \z1.52\%  &  0.91\%     \\
\hline
32 &  \z4.26 \%  &   \z2.33\% &  $\ast$ &  $\ast$    \\
\hline
\end{tabular}  
\vspace{0.5cm}

\begin{tabular}{|r|c|c|c|c|c|}
\hline
& \multicolumn{4}{|c|}{IC(0)} \\
\hline
$n_{el}$ & $p=2$ & $p=3$ & $p=4$ & $p=5$  \\
\hline 
8 &    81.46\% &   80.91\%  &    79.23\%   &  73.35\%  \\
\hline
16 &    87.44\%  &   82.80\%  &    79.65\%    &   73.64\%     \\
\hline
32 &  91.28\%  &  81.92\% &  $\ast$  & $\ast$     \\
\hline
\end{tabular} 
\vspace{0.5cm}

\caption{Eight of thick annulus domain (TH). Percentage of computing time of the preconditioner application in each
MINRES iteration: 
$\mathcal{P}_{D}^{\F}$ (top table) and  IC(0)  (bottom table).}
\label{tab:appl_matrix}
\end{center}
\end{table}}

\GS
\paragraph{Hollow torus} 
The last domain examined is a torus with a hole (Figure \ref{fig:Torus}), obtained by revolving an eccentric annulus (Figure \ref{fig:Torus_section}) around the $y$ axis.
We take $f=\left[ \cos(\arctan(x/z)),\ \sin(4\pi x),\ \sin(\arctan(x/z)) \right]^T$, $\nu=1$ and we impose homogeneous Dirichlet boundary conditions anywhere on the external boundary. We consider here the periodic setting, imposing $C^0$ periodic continuity in the function space.
For this problem, we present only TH discretization results and  focus on the effects of the geometry parametrization on the performances of the preconditioning strategies.
Computing time and number of iterations of $\mathcal{P}_{D}$-MINRES, $\geo{D} $-MINRES and IC(0)-MINRES are reported in Table \ref{tab:Torus_TH}. 
As expected, the geometry  parametrization of the hollow torus has a non-negligible influence on   the performance of our preconditioners.

This is especially true for the $\mathcal{P}_{D}$-MINRES strategy, that requires thousands of iterations to converge.
On the other hand, this influence is greatly reduced with partial inclusion of the geometry
($\mathcal{P}_{D}^{\F}$-MINRES). Here the number of iterations and the CPU times are two orders of magnitude lower 
than for $\mathcal{P}_{D}$-MINRES. CPU times for $\mathcal{P}_{D}^{\F}$-MINRES are also significantly better than for IC(0)-MINRES, despite the fact the number of iterations is higher.
Finally, we remark that the number of iterations for $\mathcal{P}_{D}^{\F}$-MINRES is only three times higher than $\mathcal{P}_{D}$-MINRES on the cube.

\B

{\renewcommand\arraystretch{1.2} 
\begin{table}\GS
\begin{center}
\footnotesize
\begin{tabular}{|r|c|c|c|c| }
\hline
& \multicolumn{4}{|c|}{(TH)\quad  $\mathcal{P}_{D}$-MINRES \quad Iterations / Time (sec)} \\
\hline
$n_{el}$ & $p=2$ & $p=3$ & $p=4$ & $p=5$  \\
\hline
4 &   \z2004 / \z\z\z\z6.42    &   \z4125 / \z\z\z39.16     & \z6411  /     \z153.95   &   \z8305  /   \z\z478.69     \\
\hline
8 &    \z5524   / \z\z\z80.73      &  \z7875 / \z\z360.15  &  \z9914 /      1117.12  &   11032  / \z3286.67    \\
\hline
16 &  \z9931 / \z1081.01     &  11780 /  \z3763.90     &   12964 /  8776.73   &   13553 /  18626.03      \\
\hline
32 &  12864 /  10244.45  &  13426 / 29344.81   & $\ast$   & $\ast$   \\
\hline
\end{tabular}  

\vspace{0.5cm}

\begin{tabular}{|r|c|c|c|c|}
\hline
& \multicolumn{4}{|c|}{(TH)\quad  $\mathcal{P}_{D}^{\F}$-MINRES \quad Iterations / Time (sec)} \\
\hline
$n_{el}$ & $p=2$ & $p=3$ & $p=4$ & $p=5$   \\
\hline
4 &   \z77 / \z\z0.31   & \z87 / \z\z0.89  &   \z97 / \z\z2.59     &  104 /  \z\z6.24  \\
\hline
8 &    \z96 / \z\z1.52     &  104 / \z\z4.99  &  110 / \z12.82     &  115  /  \z34.70  \\
\hline
16 &  119 / \z13.87   &   124 / \z40.89    &  133 /  \z91.82  &   139 / 197.30   \\
\hline
32 &  142 / 116.95  & 147 / 344.34   &   $\ast$ & $\ast$   \\
\hline
\end{tabular}  

\vspace{0.5cm}

\begin{tabular}{|r|c|c|c|c|}
\hline
& \multicolumn{4}{|c|}{(TH)\quad  IC(0)-MINRES \quad Iterations / Time (sec)} \\
\hline
$n_{el}$ & $p=2$ & $p=3$ & $p=4$ & $p=5$  \\
\hline
4 &     49  / \z\z1.05 &    46 / \z\z\z3.74    &  50 / \z11.79    &  50 /  \z31.42  \\
\hline
8 &    45 / \z\z5.42   &    45 / \z\z18.52    &  45  /  \z51.18       &   45  / 126.83  \\
\hline
16 &   45  / \z45.11  &   43  / \z125.60        &   45 /  307.79   &  45 /  660.63 \\
\hline
32 &   45 /  493.12     &  44  / 1352.81  & $\ast$ & $\ast$   \\
\hline
\end{tabular} 
\caption{Hollow torus domain (TH). Performance of $\mathcal{P}_{D}$-MINRES (upper table), $\mathcal{P}_{D}^{\F}$-MINRES (middle table) and  IC(0)-MINRES  (lower table). }
\label{tab:Torus_TH}
\end{center}
\end{table}}

{\renewcommand\arraystretch{1.2} 
\begin{table}\GS
\begin{center}
\footnotesize
\begin{tabular}{|r|c|c|c|c|}
\hline
& \multicolumn{4}{|c|}{(TH)\quad $\mathcal{P}_{T}^{\F}$-GMRES  \quad Iterations / Time (sec)} \\
\hline
$n_{el}$ & $p=2$ & $p=3$ & $p=4$ & $p=5$  \\
\hline
4 &     44 / \z\z0.30    &  50 / \z\z0.80     &   57 / \z2.39  &   61  /   \z\z6.89  \\
\hline
8 &    49 / \z\z1.25    &   54 /  \z\z4.54    &  58  / 11.98   &    62 /  \z31.52  \\
\hline
16 &  58 /  \z10.78    &  60  / \z32.86     &  63 / 73.52  &    67 /  159.46  \\
\hline
32 &  68 /  105.31 &  71  /  275.54       &  $\ast$   & $\ast$  \\
\hline
\end{tabular}  

\vspace{0.5cm}

\begin{tabular}{|r|c|c|c|c|}
\hline
& \multicolumn{4}{|c|}{(TH)\quad $\mathcal{P}_{C}^{\F}$-GMRES \quad Iterations / Time (sec)} \\
\hline
$n_{el}$ & $p=2$ & $p=3$ & $p=4$ & $p=5$ \\
\hline
4 &     37 / \z\z0.28   &  41 / \z\z0.74     &   45 / \z\z2.09    &   50  / \z\z6.09  \\
\hline
8 &   41  / \z\z1.16    &  45  / \z\z4.07    &  49  /  \z10.82   &    53 / \z28.59     \\
\hline
16 &  51 / \z10.27   &   55 /  \z31.73    & 59  / \z72.91     &   63  /  158.12   \\
\hline
32 & 69  / 113.81  &  72  / 299.62   &  $\ast$   & $\ast$  \\
\hline
\end{tabular}  
\caption{Hollow torus domain (TH). Performance of $\mathcal{P}_{T}^{\F}$-GMRES (upper table) and  $\mathcal{P}_{C}^{\F}$-GMRES  (lower table) }
\label{tab:Torus_GMRES}
\end{center}
\end{table}}

\GS
\paragraph{Hollow torus: variable $\nu$} 
In this paragraph we  investigate the effect of a variable kinematic viscosity $\nu$ on our preconditioning strategies.
We consider the hollow torus domain with $\nu = 1 + (k-1)(1 +
\cos(\arctan(x/z)))/2)$ depending on a parameter $k$,  $p=3$ and $n_{el}=32$ and  we compare in Table \ref{tab:nu_var} the performances of $\mathcal{P}_{D}$-MINRES, $\mathcal{P}_{D}^{\F}$-MINRES and $\mathcal{P}_{T}^{\F}$-GMRES. 

 $\mathcal{P}_{D}$-MINRES is the worse strategy both in terms of number of iterations and in computing times for all values of $k$ and in the case $k=10000$ it does not even converge.
The geometry inclusion strategy, on the other hand, succeeds in capturing the effect of the variable $\nu$; the number of iterations  of $\mathcal{P}_{D}^{\F}$-MINRES and  $\mathcal{P}_{T}^{\F}$-GMRES remains stable when $k$ varies.

We remark that $\mathcal{P}_{C}^{\F}$-GMRES has  behaviour similar to  $\mathcal{P}_{T}^{\F}$-GMRES, as it is also highlighted in the previous testcases,  and for this reason we do not consider it in the table.

{\renewcommand\arraystretch{1.2} 
\begin{table}\GS
\begin{center}
\footnotesize
\begin{tabular}{|l|c|c|c|}
\hline
  & $\mathcal{P}_{D}$-MINRES  & $\mathcal{P}_{D}^{\F}$-MINRES &  $\mathcal{P}_{T}^{\F}$-GMRES   \\
\hline
$k=1$ &   13426  / 29344.81     &     147 / 344.34    &    71 /  275.54     \\
\hline
$k=100$ &   17254   / 37667.04   &    180 /  400.46 &   84  /  325.02  \\
\hline
$k=10000$ &    $-$    &  180  / 407.68      &   84 /    326.78   \\
\hline
\end{tabular} 
\caption{Hollow torus domain (TH). Performance of $\mathcal{P}_{D}$-MINRES, $\mathcal{P}_{D}^{\F}$-MINRES and  $\mathcal{P}_{T}^{\F}$-GMRES for $p=3$ and $n_{el} = 32$.  The symbol ``$-$'' denotes the fact the the solver does not converge because of stagnation.  }
\label{tab:nu_var}
\end{center}
\end{table}
\B

\section{Conclusions} \label{sec:conclusions}

In this work we have addressed the problem of finding good
preconditioners for isogeometric discretizations of the Stokes system.
Our approach exploits the  tensor-product structure of the multivariate
B-spline basis. The application of our preconditioners
$\mathcal{P}_{D}$,  $\mathcal{P}_{T}$ and  $\mathcal{P}_{C}$ (and
their  coefficients-including  variants $\mathcal{P}_{D}^{\F}$,  $\mathcal{P}_{T}^{\F}$ and  $\mathcal{P}_{C}^{\F}$ ) requires the
solution of linear systems that have a Kronecker structure, or a
Sylvester-like equation structure. This can be performed by
direct solvers with the highest efficiency. This also guarantees
robustness with respect to both the spline degree  $p$ and mesh
resolution. \GS  Numerical tests show that  $\mathcal{P}_{D}^{\F}$,
$\mathcal{P}_{T}^{\F}$ and  $\mathcal{P}_{C}^{\F}$   allow to maintain
the performance also in case of non-trivial geometries and highly
oscillating coefficients. \B

We have performed a comparative numerical benchmarking with respect
to a more common approach which uses a similar block structure for the
preconditioner but  applies it by an   incomplete Cholesky  factorization and
an inner conjugate gradient.
The solution time is always in favour of  our preconditioners, despite that
they are influenced by the geometry parametrization. Even more
important is that our preconditioners are well suited for a matrix-free
approach, which should lead to solvers that are orders of magnitude faster. This
is the most promising research direction that we will consider in the
near future \cite{Sangalli2017}. 

There are other important extensions to this work that   will be the topic of our future researches. Multipatch geometries are possible by combining our
framework to known domain decomposition techniques, e.g. FETI-DP
\cite{Pavarino2016}. A challenging extension is to
the Oseen system, in particular with a dominant transport
term. Finally, we will work on space-time formulations.

\section*{Appendix}
In this appendix we report more details about the separation of variables strategy that we use to include in $\precV$ some information on the geometry. 
A complete analysis of the geometry inclusion strategy will be addressed in a forthcoming work.

We incorporate in $\precV$ some  information on the parametrization  present in the diagonal blocks $A_{kk}$ by making  approximations of the full matrix $\mathfrak{C}_k$ (see equations \eqref{eq:Q_TH}, \eqref{eq:Q_RT}), whose entries are functions of three variables that we denote with $c^k_{ij}(\boldsymbol\eta)$:
$$
\mathfrak{C}_k(\boldsymbol\eta)=\begin{bmatrix}
c^k_{11}(\boldsymbol\eta) & c^k_{12}(\boldsymbol\eta) & c^k_{13}(\boldsymbol\eta)\\
c^k_{21}(\boldsymbol\eta) & c^k_{22}(\boldsymbol\eta) & c^k_{23}(\boldsymbol\eta)\\
c^k_{31}(\boldsymbol\eta) & c^k_{32}(\boldsymbol\eta) & c^k_{33}(\boldsymbol\eta)
\end{bmatrix}.
$$
We discard the off-diagonal terms and approximate the diagonal entries $c^k_{11}(\boldsymbol\eta)$, $c^k_{22}(\boldsymbol\eta)$ and $c^k_{33}(\boldsymbol\eta)$ as follows (by the algorithm in \cite{Diliberto1951, Wachspress1984, Wachspress2013} )

$$
{\mathfrak{C}}_k(\boldsymbol\eta)\approx\accentset{\frown}{\mathfrak{C}}_k(\boldsymbol\eta):=\!\!
\begin{bmatrix}
 \tau_{1}^{k}(\eta_1)\mu_{2}^{k}(\eta_2)\mu_{3}^{k}(\eta_3) & 0 & 0\\
0 & \mu_{1}^{k}(\eta_1)\tau_{2}^{k}(\eta_2)\mu_{3}^{k}(\eta_3) & 0\\
0 & 0 & \mu_{1}^{k}(\eta_1)\mu_{2}^{k}(\eta_2)\tau_{3}^{k}(\eta_3) 
\end{bmatrix}.
$$
\GS The approximation above is computed directly at the quadrature points, hence no function space has to be selected a-priori.
The cost of this algorithm is proportional to the number of quadrature points, hence in our setting it requires $O(n_{el} p^d)$ FLOPs.
This cost could be easily reduced by computing the approximation on a coarser grid of points, and then extending by interpolation.
However this is not necessary, since such cost is already negligible in the context of the iterative procedures considered in this paper, as can be seen e.g. by comparing Tables \ref{tab:thick_TH} and \ref{tab:thick_annulus_TIME}.
 \B
 
Keeping the block-diagonal structure of $\precV$ (cfr. \eqref{eq:precV}),  we define for the TH discretization, $k=1,2,3$ and $i,j=1,...,n_{V,k}^{TH}$
\begin{IEEEeqnarray*}{l}
 \left[\precVggc{k}^{TH}\right]_{i,j}   :=
\int_{\widehat{\Omega}} \  \left(\nabla \Bmull{\vect{p}+1}{\vect{\alpha},i}\right)^T  \, \accentset{\frown}{\mathfrak{C}}_k^{TH}\, \nabla  \Bmull{\vect{p}+1}{\vect{\alpha},j}\;\d\boldsymbol\eta,
\end{IEEEeqnarray*}
 while for the RT discretization, $k=1,2,3$ and $i,j=1,...,n_{V,k}^{RT}$
\begin{IEEEeqnarray*}{lcl}
\left[ \precVggc{k}^{RT}\right]_{i,j}\!\! & := & \int_{\widehat{\Omega}} \!\! \left( \nabla  \Bmull{\vect{p}+\mathbf{e}_k}{\vect{\alpha}+\mathbf{e}_k,i}\right)^T\! \accentset{\frown}{\mathfrak{C}}_k^{RT}\, \nabla \ \Bmull{\vect{p}+\mathbf{e}_k}{\vect{\alpha}+\mathbf{e}_k,j}\;\d\boldsymbol\eta 
  +   2\int_{\partial\widehat{\Omega}}
 \left[\frac{ C_{pen}}{h}\Bmull{\vect{p}+\mathbf{e}_k}{\vect{\alpha}+\mathbf{e}_k,i}\mathbf{e}_k\cdot \left(\accentset{\frown}{\mathfrak{C}}_k^{RT} \mathbf{e}_k\Bmull{\vect{p}+\mathbf{e}_k}{\vect{\alpha}+\mathbf{e}_k,j} \right) \right. \\
& &  -\left. \left(\left(\nabla^s\left(\mathbf{e}_k\Bmull{\vect{p}+\mathbf{e}_k}{\vect{\alpha}+\mathbf{e}_k,i} \right)\hat{\vel{n}}\right)\right)\cdot\left(\accentset{\frown}{\mathfrak{C}}_k^{RT}\mathbf{e}_k\Bmull{\vect{p}+\mathbf{e}_k}{\vect{\alpha}+\mathbf{e}_k,j}\right)-   \left(\left(\nabla^s\left(\mathbf{e}_k\Bmull{\vect{p}+\mathbf{e}_k}{\vect{\alpha}+\mathbf{e}_k,j} \right)\hat{\vel{n}}\right)\right)\cdot\left(\accentset{\frown}{\mathfrak{C}}_k^{RT}\mathbf{e}_k\Bmull{\vect{p}+\mathbf{e}_k}{\vect{\alpha}+\mathbf{e}_k,i}\right) \right] \d\hat{\Gamma} .
\end{IEEEeqnarray*} 

The preconditioners $\precVggc{k}$ maintain the tensor structure  of
\eqref{eq:prec3D_TH} and \eqref{eq:prec3D_RT}: \B
\begin{IEEEeqnarray*}{c}
\precVggc{1}^{TH}    =  K^{1,TH}_3 \!  \otimes\!  M^{1,TH}_2\!   \otimes\!  M^{1,TH}_1   +  M^{1,TH}_3 \! \! \otimes \!  K^{1,TH}_2\!   \otimes\!  M^{1,TH}_1   +    M^{1,TH}_3 \!   \otimes\!  M^{1,TH}_2 \!   \otimes\!  K^{1,TH}_1 ,  \\ 
\precVggc{2}^{TH}    =  K^{2,TH}_3\!  \otimes\!  M^{2,TH}_2\!   \otimes\!  M^{2,TH}_1   +   M^{2,TH}_3  \! \otimes\!  K^{2,TH}_2\!   \otimes\!  M^{2,TH}_1   +  M^{2,TH}_3\!   \otimes\!  M^{2,TH}_2\!   \otimes\!  K^{2,TH}_1  ,  \\ 
\precVggc{3}^{TH}   =   K^{3,TH}_3\!   \otimes\!  M^{3,TH}_2\!   \otimes\!  M^{3,TH}_1   +  M^{3,TH}_3\!   \otimes\!  K^{3,TH}_2\!  \otimes\!  M^{3,TH}_1   +  M^{3,TH}_3\!   \otimes\!  M^{3,TH}_2\!   \otimes\!  K^{3,TH}_1   , 
\end{IEEEeqnarray*}
\begin{IEEEeqnarray*}{c}
\precVggc{1}^{RT}   =  \widetilde{K}^{1,RT}_3 \!  \otimes\!  \widetilde{M}^{1,RT}_2\!   \otimes\!  M^{1,RT}_1  +   \widetilde{M}^{1,RT}_3\!   \otimes\!  \widetilde{K}^{1,RT}_2\!   \otimes \! M^{1,RT}_1   +    \widetilde{M}^{1,RT}_3\!   \otimes\!  \widetilde{M}^{1,RT}_2\!   \otimes\!  K^{1,RT}_1  ,  \\ 
\precVggc{2}^{RT}    =  \widetilde{K}^{2,RT}_3\!   \otimes\!  M^{2,RT}_2\!   \otimes\!  \widetilde{M}^{2,RT}_1  +   \widetilde{M}^{2,RT}_3\!  \otimes\! K^{2,RT}_2\!   \otimes\!  \widetilde{M}^{2,RT}_1  +   \widetilde{M}^{2,RT}_3\!  \otimes\!  M^{2,RT}_2\!  \otimes\! \widetilde{K}^{2,RT}_1 ,  \\ 
\precVggc{3}^{RT}    =   K^{3,RT}_3\!   \otimes\!  \widetilde{M}^{3,RT}_2\!   \otimes\!  \widetilde{M}^{3,RT}_1  +   M^{3,RT}_3\!   \otimes\!  \widetilde{K}^{3,RT}_2\!   \otimes\!  \widetilde{M}^{3,RT}_1   + M^{3,RT}_3\!   \otimes\!  \widetilde{M}^{3,RT}_2\!   \otimes\!  \widetilde{K}^{3,RT}_1  ,
\end{IEEEeqnarray*} 
where, for $d,k=1,2,3$, the  new pairs $(K^d_k, M^d_k)$ and $(\widetilde{K}^d_k,\widetilde{M}^d_k)$  are
\begin{IEEEeqnarray*}{ll}
\left[K^{d,TH}_k\right]_{l,s} \!  =& \!  \int_{[0,1]}\!\!\!\!\! \tau_{k}^{d,TH}(\eta_k) (\bunivv{p+1}{\alpha_k,l})'(\eta_k) (\bunivv{p+1}{\alpha_k, s})'(\eta_k)\, \d\eta_k, \\
\left[M^{d,TH}_k\right]_{l,s} \! =& \! \int_{[0,1]}\!\!\!\!\! \mu_{k}^{d,TH}(\eta_k) \bunivv{p+1}{\alpha_k, l}(\eta_k) \ \bunivv{p+1}{\alpha_k, s}(\eta_k)\, \d\eta_k, 
\end{IEEEeqnarray*}
for $l,s=2,...,m_{\alpha_k}^{p+1}-1$, and
\begin{IEEEeqnarray*}{ll}
\left[K_{k}^{d,RT}\right]_{l,s} \!  = &\!  \int_{[0,1]}\!\!\!\!\! \tau_{k}^{d,RT}(\eta_k)(\bunivv{p+1}{\alpha_k +1, l})'(\eta_k) (\bunivv{p+1}{\alpha_k+1, s})'(\eta_k)\, \d\eta_k,  \nonumber\\
\\
\left[M_{k}^{d,RT}\right]_{l,s} \!  = &\! \int_{[0,1]}\!\!\!\!\! \mu_{k}^{d,RT}(\eta_k)\bunivv{p+1}{\alpha_k+1, l}(\eta_k) \ \bunivv{p+1}{\alpha_k+1, s}(\eta_k)\, \d\eta_k, 
\end{IEEEeqnarray*}
for $l,s=2,...,\dimpu{k}\! - \! 1$, and finally
\begin{IEEEeqnarray*}{ll}
 \left[\widetilde{K}_{k}^{d,RT}\right]_{l,s}  = &  \int_{[0,1]}\!\!\!\!\! \tau_{k}^{d,RT}(\eta_k)(\bunivv{p}{\alpha_k, l})'(\eta_k) (\bunivv{p}{\alpha_k, s})'(\eta_k)\, \d\eta_k  -\bigg[ \tau_{k}^{d,RT}(1)(\bunivv{p}{\alpha_k, l})'(1) \bunivv{p}{\alpha_k, s}(1)  \\
 & \qquad - \tau_{k}^{d,RT}(0)(\bunivv{p}{\alpha_k, l})'(0) \bunivv{p}{\alpha_k, s}(0)     + \tau_{k}^{d,RT}(1)(\bunivv{p}{\alpha_k, s})'(1) \bunivv{p}{\alpha_k, l}(1)  \\ 
 & \qquad - \tau_{k}^{d,RT}(0)(\bunivv{p}{\alpha_k, s})'(0) \bunivv{p}{\alpha_k, l}(0)   -2\frac{C_{pen}}{h}\big(\tau_{k}^{d,RT}(1)\bunivv{p}{\alpha_k, l}(1)\bunivv{p}{\alpha_k, s}(1) \\
 & \qquad + \tau_{k}^{d,RT}(0)\bunivv{p}{\alpha_k, l}(0)\bunivv{p}{\alpha_k, s}(0) \big) \bigg], \\
\left[\widetilde{M}_k^{d,RT}\right]_{l,s} = &  \int_{[0,1]}\!\!\!\!\! \mu_{k}^{d,RT}(\eta_k)\bunivv{p}{\alpha_k, l}(\eta_k) \ \bunivv{p}{\alpha_k, s}(\eta_k)\, \d\eta_k,
\end{IEEEeqnarray*}
for $l,s=1,...,\dimp{k}$.


\section*{Acknowledgments}

The authors were partially supported by the European Research Council
through the FP7 Ideas Consolidator Grant \emph{HIGEOM} n.616563.
The authors are members of the  Gruppo Nazionale Calcolo
Scientifico-Istituto Nazionale di Alta Matematica (GNCS-INDAM), and
the third  author was partially supported by GNCS-INDAM for this research.
This support are gratefully acknowledged.

 \bibliographystyle{plain}
 \bibliography{biblio_preconditioners}

\begin{thebibliography}{10}
\expandafter\ifx\csname url\endcsname\relax
  \def\url#1{\texttt{#1}}\fi
\expandafter\ifx\csname urlprefix\endcsname\relax\def\urlprefix{URL }\fi
\expandafter\ifx\csname href\endcsname\relax
  \def\href#1#2{#2} \def\path#1{#1}\fi

\bibitem{Hughes2005}
T.~J.~R. Hughes, J.~A. Cottrell, Y.~Bazilevs, Isogeometric analysis: {CAD},
  finite elements, {NURBS}, exact geometry and mesh refinement, Computer
  Methods in Applied Mechanics and Engineering 194~(39) (2005) 4135--4195.

\bibitem{Cottrell2009}
J.~A. Cottrell, T.~J.~R. Hughes, Y.~Bazilevs, Isogeometric analysis: toward
  integration of {CAD} and {FEA}, John Wiley \& Sons, 2009.

\bibitem{acta-IGA}
L.~Beir{\~a}o~da Veiga, A.~Buffa, G.~Sangalli, R.~V\'{a}zquez, Mathematical
  analysis of variational isogeometric methods, Acta Numerica 23 (2014)
  157--287.

\bibitem{Evans_Bazilevs_Babuska_Hughes}
J.~A. Evans, Y.~Bazilevs, I.~Babu\v{s}ka, T.~J.~R. Hughes, $n$-widths,
  sup-infs, and optimality ratios for the $k$-version of the isogeometic finite
  element method, Comput. Methods Appl. Mech. Engrg. 198 (2009) 1726--1741.

\bibitem{HRS08}
T.~J.~R. Hughes, A.~Reali, G.~Sangalli, Duality and unified analysis of
  discrete approximations in structural dynamics and wave propagation:
  comparison of {$p$}-method finite elements with {$k$}-method {NURBS}, Comput.
  Methods Appl. Mech. Engrg. 197~(49-50) (2008) 4104--4124.

\bibitem{GCBH08}
H.~G\'omez, V.~Calo, Y.~Bazilevs, T.~J.~R. Hughes, Isogeometric analysis of the
  {C}ahn-{H}illiard phase field model, Comput. Methods Appl. Mech. Engrg.
  49--50 (2008) 4333 -- 4352.

\bibitem{Buffa2011}
A.~Buffa, J.~Rivas, G.~Sangalli, R.~V{\'a}zquez, Isogeometric discrete
  differential forms in three dimensions, SIAM Journal on Numerical Analysis
  49~(2) (2011) 818--844.

\bibitem{Bazilevs2006}
Y.~Bazilevs, L.~Beirao~da Veiga, J.~A. Cottrell, T.~J.~R. Hughes, G.~Sangalli,
  Isogeometric analysis: approximation, stability and error estimates for
  h-refined meshes, Math. Mod. and Meth. Appl. Sc. 16~(07) (2006) 1031--1090.

\bibitem{Buffa2011b}
A.~Buffa, C.~De~Falco, G.~Sangalli, Isogeometric analysis: stable elements for
  the 2d {S}tokes equation, International Journal for Numerical Methods in
  Fluids 65~(11-12) (2011) 1407--1422.

\bibitem{Bressan2010}
A.~Bressan, Isogeometric regular discretization for the {S}tokes problem, IMA
  journal of numerical analysis 31~(4) (2010) 1334--1356.

\bibitem{Bressan2012}
A.~Bressan, G.~Sangalli, Isogeometric discretizations of the {S}tokes problem:
  stability analysis by the macroelement technique, IMA Journal of Numerical
  Analysis 33~(2) (2012) 629--651.

\bibitem{Evans2013}
J.~A. Evans, T.~J.~R. Hughes, Isogeometric divergence-conforming {B}-splines
  for the {D}arcy--{S}tokes--{B}rinkman equations, Mathematical Models and
  Methods in Applied Sciences 23~(04) (2013) 671--741.

\bibitem{Evans2013b}
J.~A. Evans, T.~J.~R. Hughes, Isogeometric divergence-conforming {B}-splines
  for the steady {N}avier--{S}tokes equations, Mathematical Models and Methods
  in Applied Sciences 23~(08) (2013) 1421--1478.

\bibitem{Evans2013c}
J.~A. Evans, T.~J.~R. Hughes, Isogeometric divergence-conforming {B}-splines
  for the unsteady {N}avier--{S}tokes equations, Journal of Computational
  Physics 241 (2013) 141--167.

\bibitem{Collier2012}
N.~Collier, D.~Pardo, L.~Dalcin, M.~Paszynski, V.~M. Calo, The cost of
  continuity: a study of the performance of isogeometric finite elements using
  direct solvers, Computer Methods in Applied Mechanics and Engineering 213
  (2012) 353--361.

\bibitem{Buffa2013}
A.~Buffa, H.~Harbrecht, A.~Kunoth, G.~Sangalli, {BPX}-preconditioning for
  isogeometric analysis, Computer Methods in Applied Mechanics and Engineering
  265 (2013) 63--70.

\bibitem{BeiraodaVeiga2013}
L.~Beir{\~a}o~da Veiga, D.~Cho, L.~F. Pavarino, S.~Scacchi, {BDDC}
  preconditioners for isogeometric analysis, Mathematical Models and Methods in
  Applied Sciences 23~(06) (2013) 1099--1142.

\bibitem{Donatelli2015}
M.~Donatelli, C.~Garoni, C.~Manni, S.~Serra-Capizzano, H.~Speleers, Robust and
  optimal multi-iterative techniques for {I}g{A} {G}alerkin linear systems,
  Computer Methods in Applied Mechanics and Engineering 284 (2015) 230--264.

\bibitem{Hofreither2017}
C.~Hofreither, S.~Takacs, Robust multigrid for isogeometric analysis based on
  stable splittings of spline spaces, SIAM Journal on Numerical Analysis 55~(4)
  (2017) 2004--2024.

\bibitem{Sangalli2016}
G.~Sangalli, M.~Tani, Isogeometric preconditioners based on fast solvers for
  the {S}ylvester equation, SIAM Journal on Scientific Computing 38~(6) (2016)
  A3644--A3671.

\bibitem{Cortes2015}
A.~M. C{\^o}rtes, A.~L. G.~A. Coutinho, L.~Dalcin, V.~M. Calo, Performance
  evaluation of block-diagonal preconditioners for the divergence-conforming
  {B}-spline discretization of the {S}tokes system, Journal of Computational
  Science 11 (2015) 123--136.

\bibitem{Cortes2017}
A.~M. C{\^o}rtes, L.~Dalcin, A.~F. Sarmiento, N.~Collier, V.~M. Calo, A
  scalable block-preconditioning strategy for divergence-conforming {B}-spline
  discretizations of the {S}tokes problem, Computer Methods in Applied
  Mechanics and Engineering 316 (2017) 839--858.

\bibitem{Pavarino2016}
L.~Pavarino, S.~Scacchi, Isogeometric block {FETI}-{DP} preconditioners for the
  {S}tokes and mixed linear elasticity systems, Computer Methods in Applied
  Mechanics and Engineering 310 (2016) 694--710.

\bibitem{Coley2017}
C.~Coley, J.~Benzaken, J.~A. Evans, A geometric multigrid method for
  isogeometric compatible discretizations of the generalized {S}tokes and
  {O}seen problems, arXiv preprint arXiv:1705.09282.

\bibitem{Takacs2017}
S.~Takacs, Robust multigrid methods for isogeometric discretizations of the
  {S}tokes equations, arXiv preprint arXiv:1705.04481.

\bibitem{Elman2014}
H.~C. Elman, D.~J. Silvester, A.~J. Wathen, Finite elements and fast iterative
  solvers: with applications in incompressible fluid dynamics, Numerical
  Mathematics \& Scientific Computation, 2014.

\bibitem{Simoncini2016}
V.~Simoncini, Computational methods for linear matrix equations, SIAM Review
  58~(3) (2016) 377--441.

\bibitem{Deville2002}
M.~O. Deville, P.~F. Fischer, E.~H. Mund, High-order methods for incompressible
  fluid flow, Cambridge University Press, 2002.

\bibitem{Lynch1964}
R.~E. Lynch, J.~R. Rice, D.~H. Thomas, Direct solution of partial difference
  equations by tensor product methods, Numerische Mathematik 6~(1) (1964)
  185--199.

\bibitem{Sangalli2017}
G.~Sangalli, M.~Tani, Matrix-free isogeometric analysis: the computationally
  efficient $ k $-method, arXiv preprint arXiv:1712.08565.

\bibitem{DeBoor2001}
C.~De~Boor, {A practical guide to splines; rev. ed.}, Applied {M}athematical
  {S}ciences, Springer, Berlin, 2001.

\bibitem{Kolda2009}
T.~G. Kolda, B.~W. Bader, Tensor decompositions and applications, SIAM review
  51~(3) (2009) 455--500.

\bibitem{Benzi2005}
M.~Benzi, G.~H. Golub, J.~Liesen, Numerical solution of saddle point problems,
  Acta numerica 14 (2005) 1--137.

\bibitem{Wathen1993}
A.~Wathen, D.~Silvester, Fast iterative solution of stabilised {S}tokes
  systems. {P}art {I}: {U}sing simple diagonal preconditioners, SIAM Journal on
  Numerical Analysis 30~(3) (1993) 630--649.

\bibitem{Silvester1994}
D.~Silvester, A.~Wathen, Fast iterative solution of stabilised {S}tokes systems
  {P}art {II}: {U}sing general block preconditioners, SIAM Journal on Numerical
  Analysis 31~(5) (1994) 1352--1367.

\bibitem{Murphy2000}
M.~F. Murphy, G.~H. Golub, A.~J. Wathen, A note on preconditioning for
  indefinite linear systems, SIAM J. Sci. Comput. 21~(6) (2000) 1969--1972.

\bibitem{Paige1975}
C.~C. Paige, M.~A. Saunders, Solution of sparse indefinite systems of linear
  equations, SIAM journal on numerical analysis 12~(4) (1975) 617--629.

\bibitem{Keller2000}
C.~Keller, N.~I.~M. Gould, A.~J. Wathen, Constraint preconditioning for
  indefinite linear systems, SIAM J. Matrix Anal. Appl. 21~(4) (2000)
  1300--1317.

\bibitem{Saad1986}
Y.~Saad, M.~H. Schultz, {GMRES}: {A} generalized minimal residual algorithm for
  solving nonsymmetric linear systems, SIAM Journal on scientific and
  statistical computing 7~(3) (1986) 856--869.

\bibitem{grinevich2009iterative}
P.~P. Grinevich, M.~A. Olshanskii, An iterative method for the stokes-type
  problem with variable viscosity, SIAM Journal on Scientific Computing 31~(5)
  (2009) 3959--3978.

\bibitem{Ciarlet1988}
P.~G. Ciarlet, Mathematical elasticity. {V}ol. {I}, Vol.~20 of Studies in
  Mathematics and its Applications, North-Holland Publishing Co., Amsterdam,
  1988, three-dimensional elasticity.

\bibitem{Evans2013trace}
J.~A. Evans, T.~J.~R. Hughes, Explicit trace inequalities for isogeometric
  analysis and parametric hexahedral finite elements, Numerische Mathematik
  123~(2) (2013) 259--290.

\bibitem{Vazquez2016}
R.~V{\'a}zquez, A new design for the implementation of isogeometric analysis in
  {O}ctave and {M}atlab: {G}eo{PDE}s 3.0, Computers \& Mathematics with
  Applications 72~(3) (2016) 523--554.

\bibitem{Sorber2014}
L.~Sorber, M.~Van~Barel, L.~De~Lathauwer, Tensorlab v2. 0, Available online,
  URL: www.tensorlab.net.

\bibitem{Calabro2017}
F.~Calabr{\`o}, G.~Sangalli, M.~Tani, Fast formation of isogeometric galerkin
  matrices by weighted quadrature, Computer Methods in Applied Mechanics and
  Engineering 316 (2017) 606--622.

\bibitem{Mantzaflaris2017}
A.~Mantzaflaris, B.~J{\"u}ttler, B.~N. Khoromskij, U.~Langer, Low rank tensor
  methods in {G}alerkin-based isogeometric analysis, Comput. Methods Appl.
  Mech. Engrg. 316 (2017) 1062--1085.

\bibitem{Diliberto1951}
S.~Diliberto, E.~Straus, On the approximation of a function of several
  variables by the sum of functions of fewer variables, Pacific Journal of
  Mathematics 1~(2) (1951) 195--210.

\bibitem{Wachspress1984}
E.~L. Wachspress, Generalized {ADI} preconditioning, Computers \& mathematics
  with applications 10~(6) (1984) 457--461.

\bibitem{Wachspress2013}
E.~L. Wachspress, The {ADI} model problem, Springer, 2013.

\end{thebibliography}
\end{document}